\newtheorem{theorem}{Theorem}
\newtheorem{proposition}{Proposition}
\newtheorem{lemma}{Lemma}
\theoremstyle{definition}
\newtheorem{definition}{Definition}
\theoremstyle{remark}
\newtheorem{remark}{Remark}
\newcommand{\R}{{\mathbb R}}
\newcommand{\C}{{\mathbb C}}
\newcommand{\N}{{\mathbb N}}
\newcommand{\Z}{{\mathbb Z}}
\renewcommand{\a}{\alpha}
\renewcommand{\b}{\beta}
\begin{document}

\title[The Bergman kernel
on model domains]{The asymptotic behavior of 
the Bergman kernel \\
on pseudoconvex model domains }
\author{Joe Kamimoto}
\address{Kyushu university, 
Motooka 744, Nishi-ku, 
Fukuoka, 819-0395, Japan.}  
\email{joe@math.kyushu-u.ac.jp}
%
%
\maketitle

\begin{abstract}
In this paper, we investigate the asymptotic 
behavior of the Bergman kernel at the boundary
for some pseudoconvex model domains. 
This behavior can be described by 
the geometrical information of the Newton polyhedron
of the defining function of the respective domains. 
We deal with not only the finite type cases 
but also some infinite type cases. 
\end{abstract}

\section{Introduction}

Let $\Omega$ be a domain in $\C^n$ and 
let $A^2(\Omega)$ be the Hilbert space 
of the $L^2$-holomorphic functions on $\Omega$. 
The {\it Bergman kernel} $B_{\Omega}(z)$ of $\Omega$ 
(on the diagonal) is defined by 
$B_{\Omega}(z)=\sum_{\alpha} |\phi_{\alpha}(z)|^2$, 
where $\{\phi_{\alpha}\}$ is a complete orthonormal 
basis of $A^2(\Omega)$. 
Throughout this paper, 
we assume that the boundary $\partial\Omega$ 
of $\Omega$ is always $C^{\infty}$-smooth.

Since the behavior of the Bergman kernel 
at the boundary plays essentially important roles in
the study of several complex variables and 
complex geometry, 
many interesting results about its behavior  
have been obtained. 

In the case of strictly pseudoconvex domains, 
a beautiful asymptotic expansion of the Bergman 
kernel was given by C. Fefferman \cite{Fef74}
and Boutet de Monvel and Sj\"{o}strand \cite{BoS76}.
In the case of weakly pseudoconvex domains,  
many kinds of important results have been obtained
(see the references in \cite{Kam04}, \cite{ChF12}, etc.). 
In particular, 
in the two-dimensional finite type case,  
an asymptotic expansion analogous to 
that of Fefferman was recently given 
by Hsiao and Savale \cite{HsS22}.
%
On the other hand, 
in the higher dimensional case, 
there does not seem to be 
such strong and general results. 
In \cite{Kam04}, the author investigated 
a special case of pseudoconvex model domains 
and computed some asymptotic 
expansion of the Bergman kernel.  
The purpose of this paper is to 
generalize the results in \cite{Kam04}.

In \cite{Kam04}, only the finite type case was 
dealt with, while more general cases 
will be considered in this paper; for example,
some infinite type cases can be also dealt with.
Some two-dimensional infinite type cases have been 
precisely investigated in \cite{Bha10}, \cite{Bha20}.
We will consider higher dimensional cases, 
which are more complicated.  

From the results in \cite{Kam04}, \cite{CKO04},
\cite{ChL09}, \cite{CKN11}, \cite{ChF12}, 
it might be recognized that 
the information of the boundary 
from the viewpoint of singularity theory 
is valuable for the exact analysis of the Bergman kernel 
in the higher dimensional weakly 
pseudoconvex case. 
In particular, the {\it Newton polyhedron} 
determined from the boundary contains 
fruitful information for 
the singularity of the Bergman kernel 
at the boundary. 

One of the difficulty of the analysis 
in the infinite type case
is caused by the existence
of non-zero {\it flat functions}  
(see Section 2.1). 
Notice that flat functions do not 
affect the geometry of the Newton polyhedron
and the influence of flat functions
is subtle in the singularity of 
the Bergman kernel. 
However, 
this practical influence cannot always be negligible. 
In the main theorem (Theorem~2),  
we will give a certain condition 
on the geometry of the Newton polyhedron
to determine the case 
where the above influence of 
flat functions can be negligible
in some sense.

This paper is organized as follows. 
In Section~2, 
we state a main theorem 
and explain its significance. 
In Section~3, 
we exhibit
an integral formula 
of the Bergman kernel
given by F. Haslinger \cite{Has94}, \cite{Has98},
on which our analysis is based. 
In Section~4, 
we show that the singularity 
of the Bergman kernel can be 
completely determined by 
the local geometry of the boundary. 
In our analysis, it is necessary
to consider various kinds of $C^{\infty}$ functions, but 
the $C^{\infty}$ class contains 
many troublesome functions.
In \cite{KaN16jmst}, \cite{KaN16tams}, 
a certain class of $C^{\infty}$ functions, 
called the $\hat{\mathcal E}$-class, is 
introduced by the use of Newton polyhedra, 
which is easy to deal with.
Moreover, we also introduce a class analogous to 
the $\hat{\mathcal E}$-class
in the complex variables case 
in Section~5. 
In Section~6, 
we investigate 
the singularity of the Bergman kernel 
in the $\hat{\mathcal E}$-case. 
In Section~7, 
a main theorem can be shown 
by the use of the results 
in the $\hat{\mathcal E}$-case.
By the way, 
the behavior of 
some Laplace type integrals 
is a key in the analysis in Section~6. 
The work of Varchenko in \cite{Var76}
(see also \cite{AGV88}) 
concerning local zeta functions and 
oscillatory integrals plays crucial roles
in the investigation of the above behavior.
In Section~8, 
we explain some results in \cite{CKN13},  
\cite{KaN16jmst}, \cite{KaN16tams}, 
which generalize the above 
Varchenko's results, and 
apply these results to the anlaysis 
of the Bergman kernel. 
In the last section, 
we will explain some important words and concepts.

\bigskip

{\it Notation and symbols.}\quad

\begin{itemize}
\item
We denote by $\N$, $\Z$, $\R$, $\C$ the set consisting of 
all natural numbers, integers, real numbers, 
complex numbers, respectively. 
Moreover, we denote by $\Z_{+}$, $\R_{+}$  
the set consisting of 
all nonnegative integers, real numbers, respectively.
For $s\in\C$, ${\rm Im}(s)$ expresses the imaginary part of $s$.
\item
Let 
$\alpha:=(\alpha_1,\ldots,\alpha_n)$,
$\beta:=(\beta_1,\ldots,\beta_n)\in\Z_+^n$. 
The multi-indeces will be used as follows.
For $x=(x_1,\ldots,x_n)\in\R^n$, 
define
\begin{eqnarray*}
x^{\a}=x_1^{\a_1}\cdots x_n^{\a_n}, 
\quad 
\|x\|_{\R}=\sqrt{x_1^2+\cdots+x_n^2}.
\end{eqnarray*}
For $z=(z_1,\ldots,z_n), \,\,
\bar{z}=(\bar{z}_1,\ldots,\bar{z}_n)
\in\C^n$, 
define
\begin{eqnarray*}
&& 
z^{\alpha}:=z_1^{\a_1}\cdots z_n^{\a_n}, \,\,
\bar{z}^{\b}:=\bar{z}_1^{\b_1}\ldots\bar{z}_n^{\b_n}, \,\,
|z|^{2\alpha}:=|z_1|^{2\a_1}\cdots|z_n|^{2\a_n}, \\
&&\|z\|=\sqrt{|z_1|^{2}+\cdots +|z_n|^{2}}.
\end{eqnarray*}
\item 
For a positive number $R$, we denote 
\begin{eqnarray*}
D_{\R}(R)=\{x\in\R^n: 
\|x\|_{\R}<R\},
\quad 
D(R)=\{z\in\C^n: \|z\|<R\}.
\end{eqnarray*}
\end{itemize}


\section{Main results}

\subsection{Newton data}
In this paper, 
many concepts in {\it convex geometry} play 
useful roles. 
We will explain the exact meanings of 
necessary words 
in convex geometry in Section~9.2 
(see also \cite{Zie95}).

\medskip

Let $F$ be a real-valued 
$C^{\infty}$ function defined near the origin in $\C^n$.
Let 
\begin{equation*}
\sum_{\alpha,\beta\in\Z_+^n}
C_{\alpha \beta} z^{\alpha} \bar{z}^{\beta}
=\sum_{\alpha,\beta\in\Z_+^n}
C_{\alpha \beta} z_1^{\alpha_1}\cdots z_n^{\alpha_n} 
\bar{z}_1^{\beta_1}\cdots \bar{z}_n^{\beta_n}
\end{equation*}
be the Taylor series of $F$ at the origin. 
The {\it support of} $F$ is the set
$S_F=\{\alpha+\beta\in\Z_+^n:
C_{\alpha\beta}\neq 0\}$ and the 
{\it Newton polyhedron of } $F$ is 
the integral polyhedron 
\begin{equation*}
{\mathcal N}_+(F)
=\mbox{ 
the convex hull of the set 
$\displaystyle \bigcup\{\alpha+\beta+\R_+^n:
\alpha+\beta \in S_F \}$ in $\R_+^n$.}
\end{equation*} 
We say 
that $F$ is {\it flat} if   
${\mathcal N}_+(F)=\emptyset$ and
that $F$ is {\it convenient}
if ${\mathcal N}_+(F)$ intersects 
all the axes.
For a compact face $\gamma$ of ${\mathcal N}_+(F)$, 
the $\gamma$-part of $F$ is defined by 
\begin{equation*}
F_{\gamma}(z)=
\sum_{\alpha+\beta\in\gamma\cap\Z_+^n}
C_{\alpha \beta} z^{\alpha} \bar{z}^{\beta}.
\end{equation*}

We define a quantity $\rho_F$ ($\in\Z_+$) 
as follows.
When $F$ is convenient, let
\begin{equation*}
\rho_F:=\max\{\rho_j(F):j=1,\ldots,n\},
\end{equation*}
where 
$$\rho_j(F):=\min\{t\geq0:
(0,\ldots,\stackrel{(j)}{t},\ldots,0)
\in {\mathcal N}_+(F)\}.
$$
When $F$ is not convenient, let 
$\rho_F:=\infty$.

Hereafter, we assume that 
$F$ is not flat. 
The {\it Newton distance of} $F$ is 
the nonnegative number 
\begin{equation*}
d_F:=\min\{t\geq 0 :(t,\ldots,t)\in 
{\mathcal N}_+(F)\}.
\end{equation*}
Since $F$ is not flat, 
there exists the minimum proper face of 
the Newton polyhedron ${\mathcal N}_+(F)$ containing 
the point $P_F:=(d_F,\ldots,d_F)$, which is called 
the {\it principal face} of ${\mathcal N}_+(F)$ and 
is denoted by $\gamma_*$. 
The codimension of $\gamma_*$ 
is called the {\it Newton multiplicity} 
of $F$, which is denoted by $m_F$
(i.e.,  $m_F=n-\dim(\gamma_*)$).
In particular, 
when $P_F$ is a vertex of ${\mathcal N}_+(F)$,
$\gamma_*$ is the point $P_F$ 
and $m_F=n$.
When $\gamma_*$ is compact, 
the {\it principal part} of $F$ is defined by 
$$
F_*(z)=
\sum_{\alpha,\beta\in\gamma_*\cap\Z_+^n}
C_{\alpha \beta} z^{\alpha} \bar{z}^{\beta}
$$
(i.e., the principal part of $F$ is the $\gamma_*$-part
of $F$).

\subsection{Main results}

Let $U$ be a complete pseudoconvex Reinhardt domain
in $\C^n$ (possibly $U=\C^n$).
Let $F$ be a real-valued $C^{\infty}$ function 
on $U$ satisfying the following conditions 
\begin{enumerate}
\item[(A)] {$F(z)=0$ if and only if $z=0$} 
and $F$ is not flat at the origin;
\item[(B)] $F$ is a plurisubharmonic function on $U$;
\item[(C)] 
$F(e^{i\theta_1} z_1,\ldots, e^{i\theta_n} z_n)
=F(z_1,\ldots,z_n)$ for any $\theta_j\in\R$ 
and $z\in U$;
\item[(D)]
If $U$ is unbounded, then 
there are some positive numbers $c$, $\beta$, $L$ 
such that $F(z)\geq c \|z\|^{\beta}$ 
for $z\in U\setminus
D(L)$.
\end{enumerate}

We will mainly deal with pseudoconvex model domains 
in $\C^{n+1}$ of the form 
$$
\Omega_F
=\{
(z_0, z_1, \ldots, z_n)\in \C \times U:
{\rm Im}(z_0)>F(z_1,\ldots, z_n)
\}.
$$

Note that the condition (D) implies that 
the dimension of $A^2(\Omega_F)$ is infinity.

In the case of the domain $\Omega_F$, 
the finite type condition can be easily 
seen in the information of the Newton polyhedron
of $F$. 
Let $\Delta_1(\partial\Omega_F,0)$ be 
the D'Angelo type
of $\partial\Omega_F$ at the origin.

\begin{lemma}[\cite{Kam21jmsj}]
$\Delta_1(\partial\Omega_F, 0)=\rho_F$.
In particular, 
the following two conditions are equivalent.
\begin{enumerate}
\item{$\partial\Omega_F$ is of finite type at $0$
(i.e., $\Delta_1(\partial\Omega_F,0)<\infty$);}
\item{$F$ is convenient 
(i.e., ${\mathcal N}_+(F)$ intersects all axes).}
\end{enumerate}
\end{lemma}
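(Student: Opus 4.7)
The plan is to compare both sides directly via the D'Angelo definition
$$
\Delta_1(\partial\Omega_F,0)=\sup_\phi\frac{\nu_0(r\circ\phi)}{\nu_0(\phi)},
$$
with $r(z_0,z_1,\ldots,z_n)={\rm Im}(z_0)-F(z_1,\ldots,z_n)$ and the supremum taken over nonconstant germs $\phi\colon(\C,0)\to(\C^{n+1},0)$. The circle symmetry (C) kills every off-diagonal Taylor coefficient of $F$, so the Taylor series involves only diagonal monomials $C_{\alpha\alpha}|z|^{2\alpha}$ and $S_F\subset 2\Z_+^n$. Consequently, each $\rho_j(F)$ is the lowest power of $|z_j|$ appearing in the Taylor series of the axial restriction $z_j\mapsto F(0,\ldots,z_j,\ldots,0)$, equal to $+\infty$ precisely when that restriction is flat; the equivalence (i)$\Leftrightarrow$(ii) will therefore follow immediately from the main equality $\Delta_1=\rho_F$, on which I concentrate.

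For the lower bound I would test, for each $j\in\{1,\ldots,n\}$, the holomorphic curve $\phi^{(j)}(\zeta)$ in $\C^{n+1}$ whose $z_j$-component equals $\zeta$ and whose other components (including $z_0$) vanish. Then $r\circ\phi^{(j)}(\zeta)=-F(0,\ldots,\zeta,\ldots,0)$ vanishes to order exactly $\rho_j(F)$ at the origin (or $+\infty$ when the axial restriction is flat, in which case the entire curve lies in $\partial\Omega_F$). Since $\nu_0(\phi^{(j)})=1$, this yields $\Delta_1\geq\rho_j(F)$ for each $j$, hence $\Delta_1\geq\max_j\rho_j(F)=\rho_F$, uniformly in the finite-type and infinite-type cases.

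For the upper bound I may assume $F$ is convenient, so $\rho_F<\infty$. Given a nonconstant $\phi=(\phi_0,\phi_1,\ldots,\phi_n)$, write $\phi_j(\zeta)=c_j\zeta^{k_j}+O(\zeta^{k_j+1})$ with $c_j\neq 0$ (and $k_j=\infty$ when $\phi_j\equiv 0$), and set $k=\nu_0(\phi)=\min_jk_j$. In polar coordinates $\zeta=re^{i\theta}$ two structural observations drive the estimate. First, the leading part of ${\rm Im}(\phi_0)$ is $r^{k_0}$ times the genuinely $\theta$-dependent trigonometric polynomial $\operatorname{Re}(c_0)\sin(k_0\theta)+\operatorname{Im}(c_0)\cos(k_0\theta)$. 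Second, each $|\phi|^{2\alpha}=\prod_j|\phi_j|^{2\alpha_j}$ has leading behaviour $\bigl(\prod_j|c_j|^{2\alpha_j}\bigr)r^{2\langle k,\alpha\rangle}$, which is $\theta$-independent because $|\phi_j|^2$ itself is $\theta$-independent at leading order. So the leading $r$-parts of ${\rm Im}(\phi_0)$ and of the formal composition of $F\circ\tilde\phi$ (with $\tilde\phi=(\phi_1,\ldots,\phi_n)$) occupy orthogonal trigonometric sectors and cannot annihilate one another, giving
$$
\nu_0(r\circ\phi)\leq\min\bigl(k_0,\,\nu_0(F\circ\tilde\phi)\bigr).
$$
Convenience of $F$ forces $\rho_j$ times the $j$-th standard basis vector of $\Z_+^n$ to lie in $S_F$ for every $j$ (an elementary convex-hull argument applied to the axial minimum of $\mathcal{N}_+(F)$). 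Picking $j^{*}$ among the indices with $\phi_{j^{*}}\not\equiv 0$ that minimizes $k_{j^{*}}$, the formal expansion of $F\circ\tilde\phi$ contains the pure-axis contribution $C_{\alpha^{*}\alpha^{*}}|\phi_{j^{*}}|^{\rho_{j^{*}}}$, where $\alpha^{*}=(\rho_{j^{*}}/2)$ placed in position $j^{*}$, of $r$-order $\rho_{j^{*}}k_{j^{*}}\leq\rho_F\cdot k$. Combining the two estimates produces $\nu_0(r\circ\phi)\leq\rho_F\cdot\nu_0(\phi)$, hence $\Delta_1\leq\rho_F$.

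The principal obstacle is to rule out accidental cancellations \emph{inside} the formal expansion of $F\circ\tilde\phi$: in general several vertices of $\mathcal{N}_+(F)$ contribute at the same $r$-order, and since the coefficients $C_{\alpha\alpha}$ need not be of constant sign the leading-order coefficient could vanish. I would handle this by isolating the face of $\mathcal{N}_+(F)$ on which $\alpha\mapsto\langle k,\alpha\rangle$ attains its minimum, and combining the plurisubharmonicity (B) with the strict positivity $F>0$ on $U\setminus\{0\}$ (itself a consequence of (A), (D), and the connectedness of $U$) to show that the corresponding face-polynomial of $F$ is nontrivially positive on $\R_+^n$, so that its value at $(|c_1|^2,\ldots,|c_n|^2)$ cannot vanish and the leading contribution survives.
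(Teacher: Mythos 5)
This lemma is cited from \cite{Kam21jmsj} and the paper does not reprove it, so there is no internal proof to compare against; I therefore assess your attempt on its own terms.

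The overall strategy is sound and is essentially the expected one. The lower bound via axial discs is correct and covers both the finite- and infinite-type cases. For the upper bound, your key structural observation is right, though it is cleaner when phrased in terms of monomials in $\zeta,\bar\zeta$ rather than polar coordinates: since $\phi_0$ is holomorphic, ${\rm Im}(\phi_0)$ contributes only pure monomials $\zeta^l$ and $\bar\zeta^l$, whereas (C) forces $F\circ\tilde\phi$ to be a formal series in the products $|\phi_j|^2=\phi_j\overline{\phi_j}$, so every one of its monomials has $\zeta$-degree $\geq 1$ \emph{and} $\bar\zeta$-degree $\geq 1$. These monomial supports are disjoint, so indeed $\nu_0(r\circ\phi)=\min\bigl(\nu_0({\rm Im}\,\phi_0),\nu_0(F\circ\tilde\phi)\bigr)$. (Your polar-coordinate phrasing --- ``the leading part of $F\circ\tilde\phi$ is $\theta$-independent'' --- is not literally accurate for the subleading terms, but the conclusion you draw is the correct one.) One small imprecision: the displayed inequality $\rho_{j^*}k_{j^*}\leq\rho_F\cdot k$ is not always true (it can fail when $k_0<k_{j^*}$), but you don't need it; the bound $\nu_0(r\circ\phi)\leq\min(k_0,\rho_{j^*}k_{j^*})$ plus the case split $k=k_0$ versus $k=k_{j^*}$ gives $\nu_0(r\circ\phi)\leq\rho_F\,\nu_0(\phi)$ in every case.

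The genuine gap is the non-vanishing of the face polynomial at the test point, which is exactly where the real content lies. Two things are missing. First, the assertion that (A)--(C) force $F_\gamma>0$ on $(\R\setminus\{0\})^n$ for every compact face $\gamma$ is a substantive theorem of \cite{Kam21jmsj} (and the paper cites it explicitly in Section 8.4); ``combine plurisubharmonicity with positivity'' gestures in the right direction but does not constitute a proof --- \emph{a priori} $F_\gamma\geq 0$ could have a nontrivial zero set in $(\R\setminus\{0\})^n$. Second, and independently, when some $\phi_j\equiv 0$ the evaluation point $(|c_1|^2,\ldots,|c_n|^2)$ sits on a coordinate hyperplane, where $F_\gamma$ can legitimately vanish (e.g.\ $F_\gamma=|z_1|^2|z_2|^2$ at $z_2=0$); your statement that the face polynomial is ``positive on $\R_+^n$'' is therefore not what you need. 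The correct fix is to restrict $F$ to the coordinate subspace $\{z_j=0:\phi_j\equiv 0\}$, observe that convenience is preserved and that (A)--(C) still hold for the restriction, and apply the positivity result to the face polynomial of the \emph{restricted} function on $(\R\setminus\{0\})^{J}$. With these two points supplied, the proof closes.
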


Let $B_{\Omega_F}(z_0,z)=B_{\Omega_F}(z_0,z_1,\ldots,z_n)$ 
be the Bergman kernel 
(on the diagonal) of $\Omega_F$. 
Since we are interested in 
the behavior of 
the restriction of the Bergman kernel
of $\Omega_F$ to the vertical line, 
we define
\begin{equation*}
{\mathcal B}_F(\rho):=B_{\Omega_F}(0+i\rho,0,\ldots,0)
\quad \mbox{ for $\rho>0$.} 
\end{equation*} 
The behavior of ${\mathcal B}_F(\rho)$, 
as $\rho$ tends to $0$,  can be exactly 
expressed by the use of the geometry of 
the Newton polyhedron ${\mathcal N}_+(F)$ 
of $F$.

\begin{theorem}[\cite{Kam04}]
If $\partial\Omega_F$ is of finite type at $0$, then 
\begin{equation}\label{eqn:1}
{\mathcal B}_F(\rho)=
\frac{\Psi(\rho)}{\rho^{2/d_F+2}(\log \rho)^{m_F-1}}, 
\end{equation}
where $\Psi(\rho)$ admits the asymptotic expansion:
\begin{equation}\label{eqn:2}
\Psi(\rho) \sim \sum_{j=0}^{\infty}
\sum_{k=a_j}^{\infty} 
C_{jk}\frac{\rho^{j/m}}{(\log(1/\rho))^{k}}
\quad \mbox{as $\rho\to 0$,} 
\end{equation}
where $m$ is a positive integer, 
$a_j$ are integers and 
$C_{jk}$ are real numbers
(the exact meaning of the above asymptotic 
expansion is explained in Section~9.1, below).

Furthermore, the first coefficient of the above 
expansion can be 
determined by the use of the Newton data of $F$ 
as follows:
\begin{equation}\label{eqn:3}
{\mathcal B}_F(\rho)=
\frac{C(F_*)}{\rho^{2/d_F+2}(\log \rho)^{m_F-1}}
\cdot
(1+o(\rho^{\varepsilon})) 
\quad \mbox{as $\rho\to 0$,} 
\end{equation}
where $\varepsilon$ is a positive constant and 
$C(F_*)$ is a positive constant depending only 
on the principal part $F_*$ of $F$. 
\end{theorem}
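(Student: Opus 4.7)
The plan is to reduce the computation of ${\mathcal B}_F(\rho)$ to the asymptotic analysis of a single Laplace-type integral whose phase is $F$, and then to invoke a Varchenko-type theorem to read off both the full power--log expansion (\ref{eqn:2}) and the leading coefficient in (\ref{eqn:3}).

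First I would apply Haslinger's integral formula (the subject of Section~3). Because $U$ is a complete pseudoconvex Reinhardt domain and $F$ satisfies (B) and (C), monomials form an orthogonal system in every weighted Bergman space on $U$ with weight $e^{-2\tau F}$; Fourier-decomposing $A^2(\Omega_F)$ in the real direction of $z_0$ and keeping only the constant-in-$z$ contribution (the unique monomial not vanishing at the origin) produces a formula of the shape
\begin{equation*}
{\mathcal B}_F(\rho)\;=\;c\int_0^\infty \frac{\tau\,e^{-2\tau\rho}}{I(\tau)}\,d\tau,
\qquad
I(\tau)\;:=\;\int_U e^{-2\tau F(z)}\,dV(z),
\end{equation*}
for an explicit constant $c$. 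The finite-type (i.e.\ convenient) hypothesis plus (A) guarantees $I(\tau)<\infty$ with polynomial decay at infinity, while (D) controls $I(\tau)$ for small $\tau$, making the quotient integrable against $\tau\,e^{-2\tau\rho}$ for every $\rho>0$.

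The core step is the large-$\tau$ analysis of $I(\tau)$. The Reinhardt invariance (C) lets me substitute $x_j=|z_j|^2$, turning $I(\tau)$ into a genuine real Laplace integral $\pi^n\int_{\tilde U}e^{-2\tau\tilde F(x)}\,dx$ on a subdomain of $\R_+^n$, where the Newton polyhedron of $\tilde F$ is the rescaled set ${\mathcal N}_+(F)/2$. Hence the Newton distance of $\tilde F$ is $d_F/2$, the principal face has the same codimension $m_F$, and its Newton principal part is determined by $F_*$. Varchenko's theorem \cite{Var76} (in the $C^\infty$ form adapted to Newton polyhedra) then delivers an asymptotic expansion
\begin{equation*}
I(\tau)\;\sim\;C_0\,\tau^{-2/d_F}(\log\tau)^{m_F-1}+\text{(lower order power--log terms)},
\qquad \tau\to\infty,
\end{equation*}
with $C_0>0$ depending only on $F_*$. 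Reciprocating this expansion gives the matching expansion of $1/I(\tau)$ with leading term $C_0^{-1}\tau^{2/d_F}(\log\tau)^{-(m_F-1)}$. Substituting into the $\tau$-integral and performing term-by-term Watson-lemma asymptotics via the rescaling $u=2\tau\rho$, each model term $\tau^{1+2/d_F}(\log\tau)^{-(m_F-1)}e^{-2\tau\rho}$ contributes $\rho^{-2-2/d_F}(\log(1/\rho))^{-(m_F-1)}$ times a convergent $\Gamma$-type factor, and the lower-order corrections generate precisely the double series in $\rho^{j/m}/(\log(1/\rho))^k$ of (\ref{eqn:2}); collecting the leading contribution yields (\ref{eqn:3}) with $C(F_*)$ an explicit positive multiple of $1/C_0$.

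The principal difficulty is the middle step: producing an asymptotic expansion of $I(\tau)$ sharp enough---uniformly in both power and logarithmic orders---to be inverted and then Mellin/Laplace-transformed term by term, for a merely $C^\infty$ phase whose non-principal part may be wild. This is exactly what the $\hat{\mathcal E}$-class machinery outlined in Sections~5--6 is designed to handle: it isolates $F_*$, confines the flat $C^\infty$ remainder to an error absorbed into the $o(\rho^\varepsilon)$ factor, and thereby converts the formal program above into a rigorous argument; the extensions of Varchenko's theorem from \cite{CKN13}, \cite{KaN16jmst}, \cite{KaN16tams} recalled in Section~8 provide the precise estimates needed at this step.
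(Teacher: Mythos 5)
Your proposal follows essentially the same route as the paper: reduce via Haslinger's formula to the Laplace integral $I(\tau)=c_0(\tau)^2$, obtain its power--logarithm asymptotics through the Varchenko-type analysis of the associated local zeta function (the finite-type hypothesis makes $F$ convenient, hence automatically in the $\hat{\mathcal E}$-class), and transfer this to ${\mathcal B}_F(\rho)$ by the $\tau$-integral. The differences are minor: you absorb the polar Jacobian with the substitution $u_j=|z_j|^2$ and a rescaled Newton polyhedron, whereas the paper keeps $x_j=|z_j|$ with the weight $\prod_j x_j$ and shows $d_f=d_F$, $m_f=m_F$ directly; and the localization step (Proposition~1 and Lemma~2, which replace $c_0(\tau)^2$ by its compactly supported version $\tilde c_0(\tau)^2$ with only an $O(\tau^q e^{-2\epsilon\tau})$ error before the zeta-function machinery is invoked) is present only implicitly in your sketch of convergence.
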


In this paper, 
we improve the above theorem in some sense as follows. 

\begin{theorem}
Suppose that there exists a $C^{\infty}$ function
$F_0$ defined near the origin in $\C^n$
such that $F_0$ satisfies the condition (C) and the 
$\hat{\mathcal E}$-condition at the origin 
(see Section~5)  and $F(z)-F_0(z)$ is a nonnegative flat function.  
If the principal face $\gamma_*$ of the Newton polyhedron 
of $F$ is compact, then 
\begin{equation}\label{eqn:4}
{\mathcal B}_F(\rho)=
\frac{C(F_*)}{\rho^{2/d_F+2}(\log \rho)^{m_F-1}}
\cdot
(1+o(\rho^{\varepsilon})) 
\quad \mbox{as $\rho\to 0$,} 
\end{equation}
where $\varepsilon$ is a positive constant and 
$C(F_*)$ is a positive constant depending only 
on the principal part $F_*$ of $F$. 
\end{theorem}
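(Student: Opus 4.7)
The plan is to reduce the result to the $\hat{\mathcal E}$-case treated in Section~6 and to treat the flat nonnegative difference $R:=F-F_0$ as a perturbation. By Haslinger's formula (Section~3), ${\mathcal B}_F(\rho)$ is determined by Laplace-type integrals of the form
$$
I_F(\tau):=\int_U e^{-2\tau F(z)}\,dV(z),\qquad \tau\sim 1/\rho,
$$
and by the localization result of Section~4 the asymptotic of ${\mathcal B}_F(\rho)$ as $\rho\to 0$ depends only on the germ of $F$ at $0$. Since $R$ is flat, ${\mathcal N}_+(F)={\mathcal N}_+(F_0)$ and hence $d_F$, $m_F$, $\gamma_*$, and the principal part $F_*$ all coincide for $F$ and $F_0$; in particular $C(F_*)=C((F_0)_*)$.

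The $\hat{\mathcal E}$-case result of Section~6, applied to $F_0$, yields the desired leading coefficient and rate, up to an error $o(\rho^{\varepsilon_0})$, \emph{provided} the integral $I$ is computed with $F_0$ in place of $F$. Thus the problem reduces to the estimate
\begin{equation*}
I_F(\tau)=I_{F_0}(\tau)\bigl(1+O(\tau^{-\delta})\bigr)\quad\text{as } \tau\to\infty,
\end{equation*}
for some $\delta>0$, after first restricting to a small neighborhood of the origin (the far-away part contributes exponentially small terms thanks to (A), (C), (D)). Since $R\ge 0$, monotonicity immediately gives the upper bound $I_F(\tau)\le I_{F_0}(\tau)$, which is also reflected in $\Omega_F\subseteq\Omega_{F_0}$ and hence in a one-sided inequality for the Bergman kernels; the matching lower bound is the heart of the argument.

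For this lower bound, the Varchenko-type analysis underlying Section~6 shows that $I_{F_0}(\tau)$ concentrates on a \emph{Newton scale} in which each $|z_j|$ is a negative power of $\tau$ dictated by the principal face $\gamma_*$. Flatness of $R$ gives, for every $N\in\N$, $0\le 2\tau R(z)\le c_N\tau\|z\|^N$, which for $N$ large enough is uniformly $O(\tau^{-\delta})$ on this concentration scale. Hence $e^{-2\tau R(z)}=1+O(\tau^{-\delta})$ uniformly in the region that carries the main part of $I_{F_0}(\tau)$, and integrating yields the required two-sided estimate on $I_F(\tau)$.

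The step I expect to be the main obstacle is propagating this $\tau$-level estimate uniformly through Haslinger's formula back to ${\mathcal B}_F(\rho)$: the Bergman kernel depends on $I_F(\tau)$ only through ratios and weighted integrals in $\tau$, and one must ensure that the decay $\tau^{-\delta}$ survives without cancellation so as to produce the final error $o(\rho^{\varepsilon})$. This delicate bookkeeping should be possible by combining the explicit structure of $C(F_*)$ with the uniform stability of $\hat{\mathcal E}$-class asymptotics under flat lower-order remainders, which is the content of the generalizations of Varchenko's theorem summarized in Section~8 (cf.\ \cite{CKN13}, \cite{KaN16jmst}, \cite{KaN16tams}).
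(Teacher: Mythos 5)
Your overall strategy (reduce to the $\hat{\mathcal E}$-case via Haslinger's formula, use localization, observe that ${\mathcal N}_+(F)={\mathcal N}_+(F_0)$ and that $R:=F-F_0\ge 0$ gives one inequality for free) matches the paper's, but the way you handle the remaining one-sided bound is genuinely different, and it is precisely there that your write-up leaves a real gap. You propose a ``concentration'' argument: that $\tilde{c}_0(\tau)^2=\int e^{-2\tau F_0}\Phi\,dV$ lives on a Newton scale $|z_j|\sim\tau^{-a_j}$ with all $a_j>0$, on which $2\tau R=O(\tau^{-\delta})$, and that the complementary region contributes negligibly. That last assertion is not established and is not routine: when $m_F>1$ the Laplace integral is not concentrated on a single dyadic box but on a thickened region straddling several faces (the source of the logarithms), and quantifying the tail against $\tau^{-\delta}\tilde{c}_0(\tau)^2$ would essentially require re-deriving the toric resolution analysis of Section~8 with uniform remainder control. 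As written, the ``heart of the argument'' is named but not supplied.

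The paper avoids this entirely with a sandwich construction. It sets $F_1(z)=F_0(z)+\sum_{j=1}^n|z_j|^{2M}$; for $M$ large this leaves $\gamma_*$ (and hence $d_F$, $m_F$, $F_*$) unchanged, $F_1$ is convenient and therefore automatically $\hat{\mathcal E}$, and, because $R$ is flat, $F_0\le F\le F_1$ near $0$. Monotonicity of $e^{-2\tau(\cdot)}$ then gives $\tilde{\mathcal B}_{F_0}(\rho)\le\tilde{\mathcal B}_F(\rho)\le\tilde{\mathcal B}_{F_1}(\rho)$, and applying Theorem~3 to both $F_0$ and $F_1$ pins down the asymptotics of $\tilde{\mathcal B}_F$ with no concentration estimate and no propagation bookkeeping: the squeeze already matches leading terms and error rates. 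Proposition~1 then transfers this to ${\mathcal B}_F$. If you want to keep your direct-perturbation route, you would need to prove a quantitative tail bound for $\int_{\{\|z\|>\tau^{-\eta}\}}e^{-2\tau F_0}\Phi\,dV$ relative to $\tilde{c}_0(\tau)^2$; the $F_1$-trick is a shorter path to the same conclusion and I would recommend adopting it.
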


\begin{remark}
(1)\quad 
Note that  
the finite type condition is equivalent to 
the convenience condition of $F$ from Lemma~1. 
When $F$ is convenient, 
$F$ itself satisfies the $\hat{\mathcal E}$-condition
(see \cite{KaN16jmst}, \cite{KaN16tams}) 
and the principal face of ${\mathcal N}_+(F)$ 
is always compact.
Therefore, the assumption of Theorem 2
is weaker than that of Theorem 1. 
Indeed, 
Theorem~2 can be applied to some infinite type cases.

\smallskip

(2)\quad 
In the two-dimensional case 
(i.e. $\Omega_F\subset\C^2$), 
Lemma 1 implies that the finite type condition 
is equivalent to the nonflatness of $F$.
Since we assume 
the nonflatness of $F$ in (A),  
the advantage of Theorem~2 can be only seen in 
the case where 
the dimension is higher than two.

\smallskip

(3)\quad
Theorem~2 can be applied to the following examples, 
which are in the infinite type case.
\begin{itemize}
\item
When 
$F(z_1,z_2)=|z_1|^6+ |z_1|^2|z_2|^4+ e^{-1/|z_2|^2}$
near the origin, 
$$
{\mathcal B}_F(\rho)=
\frac{C(F_*)}{\rho^{8/3}}\cdot(1+o(\rho^{\varepsilon})) 
\quad \mbox{as $\rho\to 0$.} 
$$
(Note that $F_*(z_1,z_2)=|z_1|^6+|z_1|^2 |z_2|^4$.)
\smallskip
\item
When
$F(z_1,z_2)=|z_1|^6+ |z_1|^2|z_2|^2+ e^{-1/|z_2|^2}$
near the origin, 
$$
{\mathcal B}_F(\rho)=
\frac{C(F_*)}{\rho^{3} \log \rho}\cdot(1+o(\rho^{\varepsilon})) 
\quad \mbox{as $\rho\to 0$.} 
$$
(Note that $F_*(z_1,z_2)=|z_1|^2 |z_2|^2$.)
\smallskip
\item
When
$F(z_1,z_2)=|z_1|^2|z_2|^2+ e^{-1/|z_1|^2}+ e^{-1/|z_2|^2}$
near the origin, 
$$
{\mathcal B}_F(\rho)=
\frac{C(F_*)}{\rho^{3} \log \rho}\cdot(1+o(\rho^{\varepsilon})) 
\quad \mbox{as $\rho\to 0$.} 
$$
(Note that $F_*(z_1,z_2)=|z_1|^2 |z_2|^2$.)
\smallskip
\item
When
$F(z_1,z_2,z_3)=|z_1|^8+|z_2|^8+
|z_1|^2 |z_2|^2 |z_3|^2+ e^{-1/|z_3|^2}$
near the origin, 
$$
{\mathcal B}_F(\rho)=
\frac{C(F_*)}{\rho^{3} (\log \rho)^2} \cdot(1+o(\rho^{\varepsilon})) 
\quad \mbox{as $\rho\to 0$.} 
$$
(Note that $F_*(z_1,z_2)=|z_1|^2 |z_2|^2 |z_3|^2$.)
\end{itemize}
We remark that $e^{-1/|z_j|^2}$
($j=1, 2, 3$) in the above examples
can be replaced by any flat functions which
are positive away from the origin.

\smallskip


(4)\quad
Our method in the proof of Theorem 2 cannot 
generally show 
the existence of an asymptotic expansion 
of the form (\ref{eqn:2}). 
We guess that the pattern of the asymptotic 
expansion might not 
be expressed as in the form (\ref{eqn:2}) in general  
from some observation of the strange phenomena 
in \cite{KaN19}, \cite{KaN20}, \cite{Nos22}, 
which are seen in the analytic continuation 
of local zeta functions.

\smallskip

(5)\quad
In the case where the principal face is noncompact, 
there exist many examples in which the behavior 
of the Bergman kernel ${\mathcal B}_F(\rho)$ 
as $\rho\to 0$ is different from (\ref{eqn:4}).
For example, 
in the case where 
$F(z_1,z_2)=|z_1|^2+e^{-1/|z_2|^{p}}$ near the origin,
${\mathcal B}_F(\rho)$ locally satisfies 
$$
\frac{c_1 |\log \rho|^{1/p}}{\rho^3}
\leq 
{\mathcal B}_F(\rho)
\leq 
\frac{c_2 |\log \rho|^{1/p}}{\rho^3}
$$
near $\rho=0$, where 
$c_1,c_2$ are positive constants
(\cite{Kam23}). 
Notice that the logarithmic functions 
appear in the numerators
in the above estimates. 
Observing the above estimates, 
we can see that 
the behavior of ${\mathcal B}_F(\rho)$ 
depends on $p$, which is an information of the 
flat term $e^{-1/|z_2|^{p}}$. 
In the noncompact principal face case, 
the information of the Newton polyhedron 
of $F$ cannot always determine 
the singularity of the Bergman kernel completely.

\smallskip

(6)\quad 
Of course, the constant $\varepsilon$ 
in (\ref{eqn:3}), (\ref{eqn:4})
does not depend on $\rho$. 
The equations of similar type to (\ref{eqn:3}), (\ref{eqn:4})
will be often seen in this paper. 
In these equations, 
the constant $\varepsilon$ 
can be chosen by the use of the geometry of
the respective Newton polyhedron.

Since $\rho^{\varepsilon}=o((\log(1/\rho)^{-1})$ holds
for any ${\varepsilon}>0$,
$o(\rho^{\varepsilon})$ in the equations in (3), (4) and 
in the examples in Remark~1 (3) 
can be replaced by $o((\log(1/\rho)^{-1})$.

\smallskip

(7)\quad 
It is desirable to show that 
the conditions (A-C) of $F$ imply
the existence of $F_0$ in the assumption of the theorem
(in other words, 
the existence of $F_0$ might be removed 
in the assumption).
Even if the existence of 
$F_0$ is not known, 
we can give the following estimate:
\begin{equation*}
{\mathcal B}_F(\rho)\leq 
\frac{C(F_*)}{\rho^{2/d_F+2}(\log \rho)^{m_F-1}}
\cdot
(1+C\rho^{\varepsilon})
\quad \mbox{for  $\rho\in(0,\delta)$,} 
\end{equation*}
where  
$C(F_*)$ is as in the theorem
and $C, \varepsilon, \delta$ are positive constants. 
This can be easily seen from the proof of Theorem~2.
\end{remark}

\section{Integral formula of the Bergman kernel}

Let $U$ be a domain in $\C^n$ and 
let
$F:U\to\R_+$ be a $C^{\infty}$-smooth plurisubharmonic 
function.
The weighted Hilbert space $H_{\tau}(U)$ 
($\tau>0$) consists of all entire functions
$\psi:U\to \C$ such that
$$
\int_U |\psi(z)|^2 e^{-2\tau F(z)} dV(z)<\infty,
$$
where $dV(z)$ denotes the Lebesgue measure
on $\C^n$.
We only consider the case where 
$H_{\tau}(U)$ is a nontrivial Hilbert space with
the reproducing kernel. 
(When $U$ is bounded, this is obvious. 
When $U$ is unbounded, the condition (D) 
in Section~2.2 
can imply the above nontriviality.)
We denote the above kernel (on the diagonal) 
by $K_F(z;\tau)$. 
We remark that the function
$\tau\mapsto K_F(z;\tau)$ is continuous
for fixed $z\in U$ from the result in \cite{DiO91}.
F. Haslinger \cite{Has94}, \cite{Has98} 
shows that the Bergman kernel $B_{\Omega_F}(z_0,z)$
of the domain $\Omega_F$ 
can be expressed by the use of $K_F(z;\tau)$ as follows. 
\begin{equation}\label{eqn:5}
B_{\Omega_F}(z_0,z)
=
\frac{1}{2\pi}
\int_0^{\infty} e^{-2\rho \tau}
K_F(z;\tau)\tau d\tau,
\end{equation}
where $\rho$ is the imaginary part of $z_0$.

In the case where $F$ safisfies the conditions (C), (D)
in Section~2.2,  
we can take a complete orthonormal system
for $H_{\tau}(U)$ as 
\begin{equation*}
\left\{
\frac{z^{\alpha}}{c_{\alpha}(\tau)}:
\alpha\in\Z_+^n
\right\},
\quad
\mbox{ with }
c_{\alpha}(\tau)^2=\int_U |z|^{2\alpha} 
e^{-2\tau F(z)}dV(z).
\end{equation*}
Therefore, $K_F(z;\tau)$ can be expressed as
\begin{equation}\label{eqn:6}
K_F(z;\tau)=\sum_{\alpha\in\Z_+^n} 
\frac{|z|^{2\alpha}}{c_{\alpha}(\tau)^2}.
\end{equation}
From (\ref{eqn:5}), (\ref{eqn:6}), 
${\mathcal B}_F(\rho)$ can be expressed as
\begin{equation}\label{eqn:7}
{\mathcal B}_F(\rho)=
\frac{1}{2\pi}\int_0^{\infty} 
e^{-2\rho\tau}
\frac{\tau}{{c}_0(\tau)^2}
d\tau.
\end{equation}
In order to see the behavior of
${\mathcal B}_F(\rho)$ as $\rho\to 0$, 
it suffices to investigate that of 
${c}_0(\tau)^2$ as $\tau\to\infty$.


\section{Localization lemma}

Let $U$ be an open neighborhood of 
the origin in $\C^n$ and let 
$F:U\to\R$ be a nonnegative $C^{\infty}$ function 
with $F(0)=0$. 

Let $R$ be a positive number such that 
$D(R)\subset U$.
Let $\varphi:\R^n\to[0,1]$ be a $C^{\infty}$ function 
such that 
$\varphi$ identically equals $1$ on 
$D_{\R}(R/2)$ and 
its support is contained in $D_{\R}(R)$. 
Let $\Phi:\C^n\to [0,1]$ be a $C^{\infty}$ function
defined by
$\Phi(z_1,\ldots,z_n)=\varphi(|z_1|,\ldots,|z_n|)$.

We define an integral of the form
\begin{equation}\label{eqn:8}
\tilde{\mathcal B}_F(\rho)=
\frac{1}{2\pi}
\int_0^{\infty} 
e^{-2\rho\tau}
\frac{\tau}{\tilde{c}_0(\tau)^2}
d\tau \quad \mbox{ for $\rho>0$},
\end{equation}
where
\begin{equation}\label{eqn:9}
\tilde{c}_0(\tau)^2=
\int_{U} e^{-2\tau F(z)} 
\Phi(z)dV(z).
\end{equation}

Since
there exists a positive number
$a$ such that $F(z) \leq a\|z\|$ on 
$D(R)$, we see 
\begin{equation}\label{eqn:10}
\begin{split}
\tilde{c}_0(\tau)^2&\geq
\int_{D(R/2)}e^{-2\tau F(z)}dV(z) \\
&\geq 
c\int_0^{R/2} e^{-2a\tau x} x^{2n-1}dx 
\geq 
\frac{c}{(2a\tau)^{2n}}\int_0^1 e^{-s}s^{2n-1}ds
=\frac{C}{\tau^{2n}},
\end{split}
\end{equation}
for $\tau\geq 1$, 
where $c, C$ are positive constants independent 
of $\tau$. 

By the use of the estimate (\ref{eqn:10}), 
the integral $\tilde{\mathcal B}_F(\rho)$ in (\ref{eqn:8})
can be considered as 
a $C^{\infty}$ function defined on $(0,\infty)$, 
which is samely denoted 
by $\tilde{\mathcal B}_F(\rho)$.

Since 
$\tilde{c}_0(\tau)^2 \leq c_0(\tau)^2$, 
the relationship between 
$\tilde{\mathcal B}_F(\rho)$ and 
the Bergman kernel ${\mathcal B}_F(\rho)$ 
can be seen: 
$\tilde{\mathcal B}_F(\rho)\geq{\mathcal B}_F(\rho)$.
The following proposition shows that 
the singularities of ${\mathcal B}_F(\rho)$ and
$\tilde{\mathcal B}_F(\rho)$ at the origin  
are essentially the same. 

\begin{proposition}
If $F$ satisfies the conditions (A--D) in 
Section~2.2, then 
$\tilde{\mathcal B}_F(\rho)-{\mathcal B}_F(\rho)$ 
can be real-analytically extended to
an open neighborhood of $\rho=0$. 
\end{proposition}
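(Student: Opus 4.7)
The plan is to express the difference $\tilde{\mathcal B}_F(\rho) - {\mathcal B}_F(\rho)$ as a Laplace transform of a function of $\tau$ that decays exponentially at infinity, and then extend it holomorphically into a complex neighborhood of $\rho = 0$. Combining (\ref{eqn:7}) and (\ref{eqn:8}) over a common denominator gives
\begin{equation*}
\tilde{\mathcal B}_F(\rho) - {\mathcal B}_F(\rho) = \frac{1}{2\pi}\int_0^{\infty} e^{-2\rho\tau}\,\tau\,G(\tau)\,d\tau, \qquad G(\tau) := \frac{c_0(\tau)^2 - \tilde c_0(\tau)^2}{c_0(\tau)^2\,\tilde c_0(\tau)^2},
\end{equation*}
whose numerator equals $\int_U e^{-2\tau F(z)}(1-\Phi(z))\,dV(z)$, an integrand supported in $\{z \in U : \|z\| \geq R/2\}$.

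The key step is to produce a positive constant $m > 0$ with $F \geq m$ on this support. By (A) and continuity, any sequence along which $F \to 0$ must cluster at the origin, which rules out a zero infimum on the region $\{R/2 \leq \|z\| \leq L\}\cap U$; condition (D) gives $F(z) \geq cL^{\beta} > 0$ for $\|z\| > L$. Splitting $e^{-2\tau F} \leq e^{-\tau m} e^{-\tau F}$ on the support of $1-\Phi$ then yields
\begin{equation*}
0 \leq c_0(\tau)^2 - \tilde c_0(\tau)^2 \leq e^{-\tau m}\int_U e^{-\tau F(z)}\,dV(z) = e^{-\tau m}\,c_0(\tau/2)^2.
\end{equation*}
Monotonicity of $\tau \mapsto c_0(\tau)^2$ bounds $c_0(\tau/2)^2$ by $c_0(1)^2 < \infty$ for $\tau \geq 2$ (finiteness for unbounded $U$ again using (D)), while the argument of (\ref{eqn:10}) applied to both $c_0$ and $\tilde c_0$ gives $c_0(\tau)^2 \tilde c_0(\tau)^2 \geq C\tau^{-4n}$. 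Combining, $|G(\tau)| \leq C'\tau^{4n}e^{-\tau m}$ for $\tau \geq 2$, while near $\tau = 0$, $G(\tau) = \tilde c_0(\tau)^{-2} - c_0(\tau)^{-2}$ is bounded since $\tilde c_0(0)^2 < \infty$ (as $\Phi$ has compact support).

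For complex $\rho$ with $\mathrm{Re}(\rho) > -m/2$, the integrand $e^{-2\rho\tau}\tau G(\tau)$ is dominated by $C''\tau^{4n+1}e^{-(m + 2\mathrm{Re}(\rho))\tau}$ at infinity and by an integrable function near the origin, uniformly on compact subsets of this half-plane. By differentiation under the integral sign (or Morera's theorem), the integral defines a function holomorphic in $\rho$ on $\{\mathrm{Re}(\rho) > -m/2\}$, which restricts to a real-analytic function in a neighborhood of $\rho = 0$, as claimed. The main obstacle is securing the positive lower bound $m$ for $F$ on $\{\|z\|\geq R/2\}\cap U$, which combines the nondegeneracy (A) with the growth (D); once $m$ is in hand, the polynomial factor $\tau^{4n}$ coming from the lower bound on $c_0\tilde c_0$ is harmlessly absorbed by $e^{-\tau m}$ and the Laplace-transform extension is automatic.
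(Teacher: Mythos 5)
Your proposal is correct and follows essentially the same route as the paper's: lower-bound $F$ away from the origin using (A) and (D), deduce exponential decay of $c_0(\tau)^2-\tilde c_0(\tau)^2$, combine with the polynomial lower bound (\ref{eqn:10}) on the denominator, and conclude via the Laplace-transform representation. This is exactly the content of the paper's Lemma~2; the only differences are in detail --- you peel off $e^{-m\tau}$ and bound the remainder by $c_0(\tau/2)^2$ where the paper uses the pointwise inequality $F\geq\epsilon+c\sum|z_j|^\beta$, and you spell out the holomorphic extension to a half-plane $\{\mathrm{Re}(\rho)>-m/2\}$, a step the paper leaves implicit.
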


\begin{proof}
From the integral expressions in 
(\ref{eqn:7}) and (\ref{eqn:8}), 
we have
\begin{equation*}
{\mathcal B}_F(\rho)-\tilde{\mathcal B}_F(\rho)
=\frac{1}{2\pi}\int_0^{\infty} 
e^{-2\rho\tau}
\left(
\frac{1}{c_0(\tau)^2}-
\frac{1}{\tilde{c}_0(\tau)^2}
\right)\tau d\tau.
\end{equation*}
Therefore, 
the proposition can be easily seen by the use of 
the following lemma. 
\end{proof}

\begin{lemma}
There exist positive numbers $L, C, q, \epsilon$ 
such that 
$$
\left|
\frac{1}{\tilde{c}_0(\tau)^2}-
\frac{1}{{c}_0(\tau)^2}
\right|
\leq C \tau^{q} e^{-2\epsilon \tau}
\quad \mbox{ for $\tau\geq L$.}
$$
\end{lemma}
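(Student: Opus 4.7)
The plan is to control the numerator $c_0(\tau)^2-\tilde c_0(\tau)^2$ by an exponentially small quantity, while controlling the denominators $c_0(\tau)^2$ and $\tilde c_0(\tau)^2$ from below by $C/\tau^{2n}$ (the estimate already established in (\ref{eqn:10}), which applies to $c_0$ as well since $c_0(\tau)^2\geq\tilde c_0(\tau)^2$). Writing
\begin{equation*}
\frac{1}{\tilde c_0(\tau)^2}-\frac{1}{c_0(\tau)^2}
=\frac{c_0(\tau)^2-\tilde c_0(\tau)^2}{c_0(\tau)^2\,\tilde c_0(\tau)^2},
\end{equation*}
the denominator is bounded below by $C'/\tau^{4n}$, so it suffices to prove that the numerator decays like $C\tau^{q_0} e^{-2\epsilon\tau}$ for some constants.

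First I would observe that
\begin{equation*}
c_0(\tau)^2-\tilde c_0(\tau)^2
=\int_U e^{-2\tau F(z)}\bigl(1-\Phi(z)\bigr)\,dV(z),
\end{equation*}
and the integrand is supported in $\{z\in U:\|z\|\geq R/2\}$ because $\Phi\equiv 1$ on $D(R/2)$. I would then split this region into the bounded shell $E_1=\{z\in U:R/2\leq\|z\|\leq L\}$ and the exterior part $E_2=\{z\in U:\|z\|>L\}$, where $L$ is the constant appearing in condition (D).

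On $E_1$ (a compact set away from the origin), condition (A) guarantees $F>0$, so by continuity there exists $\epsilon_1>0$ with $F(z)\geq\epsilon_1$ on $E_1$; hence that contribution is bounded by $\mathrm{vol}(E_1)\,e^{-2\epsilon_1\tau}$. On $E_2$, condition (D) gives $F(z)\geq c\|z\|^\beta$, and I would split $e^{-2\tau F(z)}\leq e^{-c\tau L^\beta}\cdot e^{-c\tau\|z\|^\beta}$; integrating the Gaussian-type factor over $\C^n$ yields $O(\tau^{-2n/\beta})$ by a radial substitution, producing a bound of the form $C\tau^{-2n/\beta}e^{-c\tau L^\beta}$. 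Choosing $2\epsilon=\min(2\epsilon_1,cL^\beta)$ and combining the two contributions gives $c_0(\tau)^2-\tilde c_0(\tau)^2\leq C_0\tau^{q_1}e^{-2\epsilon\tau}$ for $\tau$ large, and multiplying by the $O(\tau^{4n})$ bound on $1/(c_0\tilde c_0)^2$ yields the required estimate with $q=q_1+4n$.

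The steps are all routine given the hypotheses already in place; I do not anticipate a genuine obstacle, though the slightly delicate point is the unbounded piece, where one must use the growth condition (D) in the form that extracts an honest exponential decay $e^{-c\tau L^\beta}$ from $e^{-2\tau F}$ while keeping enough of the Gaussian factor to make the integral over $E_2$ converge uniformly in $\tau$. Once those two pieces are assembled, the conclusion follows immediately.
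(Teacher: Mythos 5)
Your proposal is correct and follows essentially the same route as the paper: reduce to bounding the difference $c_0(\tau)^2-\tilde c_0(\tau)^2$ exponentially from above and the product of the two norms polynomially from below using (\ref{eqn:10}). The paper phrases the algebra as $e(\tau)/\bigl(\tilde c_0(\tau)^4(1+e(\tau)/\tilde c_0(\tau)^2)\bigr)$ and asserts a single lower bound $F(z)\geq\epsilon+c(|z_1|^\beta+\cdots+|z_n|^\beta)$ on $U\setminus D(R/2)$, whereas you split into the compact shell (where (A) gives $F\geq\epsilon_1$) and the exterior (where (D) gives growth) — a minor presentational difference that makes explicit why the paper's unified inequality holds.
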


\begin{proof}
Let 
\begin{equation*}
e(\tau):={c}_0(\tau)^2-\tilde{c}_0(\tau)^2
=\int_{U} e^{-2\tau F(z)}(1-\Phi(z)) dV(z).
\end{equation*}

From the conditions (A), (C) in Section~2.2,
there exist positive numbers $c, \epsilon$ such that
\begin{equation*}
F(z)\geq 
\epsilon+c(|z_1|^{\beta}+\cdots+|z_n|^{\beta})
\quad \mbox{for $z\in U\setminus D(R/2)$}. 
\end{equation*}
By the use of the above inequality, we can 
\begin{equation}\label{eqn:11}
\begin{split}
|e(\tau)|&\leq
\int_{U \setminus D(R/2)}
e^{-2\tau F(z)} dV(z) \\
&\leq 
(2\pi)^ne^{-2\epsilon \tau} 
\int_{\R_+^n}
e^{-2c\tau(x_1^{\beta}+\cdots+x_n^{\beta})}
\left(
\prod_{j=1}^n x_j 
\right)
dx 
\leq 
C \tau^{-2n/\beta} e^{-2\epsilon\tau}.
\end{split}
\end{equation}

Applying (\ref{eqn:10}), (\ref{eqn:11}) 
to the right hand side of the equation
\begin{equation*}
\frac{1}{\tilde{c}_0(\tau)^2}-
\frac{1}{{c}_0(\tau)^2}
=
\frac{e(\tau)}{\tilde{c}_0(\tau)^4
(1+e(\tau)/\tilde{c}_0(\tau)^2)},
\end{equation*}
we can get the estimate in the lemma. 
\end{proof}


\section{The $\hat{\mathcal E}$-condition}
 
In this section, we introduce some classes of 
$C^{\infty}$ functions, 
which are defined by the use of 
Newton polyhedra. 
When $F$ belongs to this class, 
the behavior of $\tilde{\mathcal B}_F(\rho)$
is relatively easy to be understood 
(see Theorem~3, below).
 
\subsection{Newton polyhedra in the real case}
Let $f$ be a real-valued $C^{\infty}$ function
defined near the origin in $\R^n$. 
Let 
\begin{equation}\label{eqn:12}
\sum_{\alpha\in\Z_+^n} 
c_{\alpha}x^{\alpha}
=\sum_{\alpha\in\Z_+^n}
c_{\alpha}x_1^{\alpha_1}\cdots x_n^{\alpha_n}
\end{equation}
be 
the Taylor series of $f$ at the origin. 
The Newton polyhedron
of $f$ is the integral polyhedron
\begin{equation*}
{\mathcal N}_+(f)=
\mbox{the convex hull of the set
$\displaystyle 
\bigcup\{\alpha+\R_+^n:
\alpha\in S(f)$\} in $\R_+^n$},
\end{equation*}
where $S(f):=\{\alpha\in\Z_+^n: c_{\alpha}\neq 0\}$.
In the real case, we also say 
that $f$ is {\it flat} if   
${\mathcal N}_+(f)=\emptyset$ and
that $f$ is {\it convenient}
if ${\mathcal N}_+(f)$ intersects 
all the axes.


\subsection{The $\hat{\mathcal E}$-condition
in the real case}

Let $f$ be a $C^{\infty}$ function defined near the origin
in $\R^n$.
We say that 
$f$ {\it admits the $\gamma$-part}
on an open neighborhood $V$ 
of the origin in $\R^n$ 
if for any $x\in V$ the limit
\begin{equation*}
\lim_{t\to 0}
\frac{f(t^{a_1} x_1,\ldots, t^{a_n} x_n)}{t^l}
\end{equation*}
exists for {\it all} pairs 
$(a,l)=(a_1,\ldots,a_n,l)\in\Z_+^n\times\Z_+$
defining $\gamma$ 
(i.e., 
$\{x\in P:\sum_{j=1}^n a_j x_j =l\}=\gamma$).
It is known in \cite{KaN16jmst} that
when $f$ admits the $\gamma$-part, 
the above limits take the same value
for any $(a,l)$, which is denoted by 
$f_{\gamma}(x)$. 
We consider $f_{\gamma}$ as the function
on $V$, which is called the $\gamma$-{\it part}
of $f$.
For a compact face $\gamma$ of 
${\mathcal N}_+(f)$, $f$ always admits
the $\gamma$-part near the origin 
and $f_{\gamma}(x)$ equals the polynomial
$\sum_{\alpha\in\gamma\cap\Z_+^n}
c_{\alpha} x^{\alpha}$, where $c_{\alpha}$ 
are as in (\ref{eqn:12}).


\begin{definition}
[\cite{KaN16jmst}, \cite{KaN16tams}]
We say that $f$ satisfies the 
$\hat{\mathcal E}$-{\it condition 
at the origin} 
if $f$ admits the $\gamma$-part 
for every proper face $\gamma$ of 
${\mathcal N}_+(f)$.
\end{definition}

\begin{remark}
(1)\quad 
If $f$ is real analytic near the origin 
or $f$ is convenient, 
then $f$ satisfies the $\hat{\mathcal E}$-condition
(see \cite{KaN16jmst}, \cite{KaN16tams}).
In particular, 
in the one-dimensional case, 
nonflat functions satisfy the $\hat{\mathcal E}$-condition.

\smallskip

(2)\quad
For example, 
$f(x_1,x_2)=x_1^2+e^{-1/x_2^2}$ does not 
satisfy the $\hat{\mathcal E}$-condition. 
\end{remark}



Every $C^{\infty}$ function $f$
can be decomposed into 
an $\hat{\mathcal E}$-function 
and a flat function.

\begin{proposition}
For any $C^{\infty}$ function $f$ 
defined near the origin in $\R^n$, 
there exists a $C^{\infty}$ function $f_0$ 
satisfying the $\hat{\mathcal E}$-condition 
at the origin such that $f-f_0$ is flat 
at the origin. 
\end{proposition}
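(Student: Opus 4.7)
The plan is to realize the Taylor series of $f$ as a $C^{\infty}$ function $f_0$ via a classical Borel-type summation with adaptive cutoffs, and then to verify that this $f_0$ automatically satisfies the $\hat{\mathcal E}$-condition. Write $T(f)=\sum_{\alpha\in S(f)} c_{\alpha} x^{\alpha}$, fix a product cutoff $\chi(x)=\prod_{j=1}^n \chi_0(x_j)$ with $\chi_0:\R\to[0,1]$ smooth, equal to $1$ on $[-1,1]$ and supported in $[-2,2]$, and choose positive numbers $\epsilon_{\alpha}$ with $\epsilon_{\alpha}\to 0$ as $|\alpha|\to\infty$ rapidly enough that
\begin{equation*}
f_0(x)=\sum_{\alpha\in S(f)} c_{\alpha}\, x^{\alpha}\, \chi(x/\epsilon_{\alpha})
\end{equation*}
converges in $C^{\infty}$ near the origin (classical Borel construction). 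Since each $\chi_0$ is identically $1$ near $0$, every term contributes the monomial $c_{\alpha}x^{\alpha}$ to the Taylor expansion at $0$, so $T(f_0)=T(f)$; hence $f-f_0$ is flat at $0$ and ${\mathcal N}_+(f_0)={\mathcal N}_+(f)$.

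We next check that $f_0$ admits the $\gamma$-part for every proper face $\gamma$ of ${\mathcal N}_+(f_0)$. The compact case is automatic (Section~5.2), so assume $\gamma$ is non-compact, defined by $(a,l)\in\Z_+^n\times\Z_+$, and set $I=\{j:a_j>0\}$. Inserting the definition gives
\begin{equation*}
\frac{f_0(t^{a_1}x_1,\ldots,t^{a_n}x_n)}{t^l}
=\sum_{\alpha\in S(f)} c_{\alpha}\, t^{a\cdot\alpha-l}\, x^{\alpha} \prod_{j=1}^n \chi_0(t^{a_j}x_j/\epsilon_{\alpha}),
\end{equation*}
with every exponent $a\cdot\alpha-l\geq 0$ since $S(f)\subset{\mathcal N}_+(f)\subset\{a\cdot p\geq l\}$, and vanishing precisely on $\gamma\cap\Z_+^n$. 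Fixing $x$ in a small neighborhood $V$ of the origin, the plan is to show that only finitely many $\alpha$ produce nonzero terms for $t>0$ small, so the limit can be taken term by term. If $x_{j_0}\neq 0$ for some $j_0\notin I$, the factor $\chi_0(x_{j_0}/\epsilon_{\alpha})$ is $t$-independent and vanishes whenever $\epsilon_{\alpha}<|x_{j_0}|/2$, so only finitely many $\alpha$ survive thanks to $\epsilon_{\alpha}\to 0$. If instead $x_j=0$ for every $j\notin I$, then $x^{\alpha}=0$ unless $\alpha$ is supported on $I$, and the equation $a\cdot\alpha=l$ has only finitely many such solutions because $a_j>0$ on $I$; here we apply Taylor's theorem to $f_0$ to order $N$: terms with $a\cdot\alpha=l$ contribute $c_{\alpha}x^{\alpha}$ in the limit, terms with $a\cdot\alpha>l$ vanish as $t\to 0^+$, and the $C^{\infty}$ Taylor remainder, which is $o(|t^a x|^N)=o(t^{N\min_{j\in I}a_j})$, is beaten by the denominator $t^l$ once $N$ is sufficiently large. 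In every case the limit exists.

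The main obstacle is precisely this non-compact face scenario: the set $\gamma\cap\Z_+^n$ may be infinite, so the formal $\gamma$-part $\sum_{\alpha\in\gamma\cap\Z_+^n}c_{\alpha}x^{\alpha}$ need not converge as a power series, and one cannot simply realize it pointwise from the Taylor coefficients alone. The Borel cutoffs $\chi(x/\epsilon_{\alpha})$ with $\epsilon_{\alpha}\to 0$ resolve this tension by truncating the formal expression into a locally finite sum while preserving the full Taylor series, and the interplay between the rapid decay of the $\epsilon_{\alpha}$ and the geometry of the supporting half-space $\{a\cdot p\geq l\}$ is what forces the rescaled limits to exist.
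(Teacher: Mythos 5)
Your argument is correct, but it follows a genuinely different path from the paper's. The paper proceeds by induction on the number of variables: it picks a vertex $p$ of ${\mathcal N}_+(f)$, applies successive one-variable Taylor expansions to write $f(x)=P(x)+x^p R_n(x)$ with $R_n(0)\neq 0$, invokes Proposition~6.3 of \cite{KaN16jmst} to see that $x^p R_n(x)$ is an $\hat{\mathcal E}$-function, and then decomposes the $(n-1)$-variable coefficient functions appearing in $P$ via the inductive hypothesis. Your route instead realizes the Taylor series by a single global Borel resummation with rapidly shrinking cutoffs and then checks the $\hat{\mathcal E}$-condition directly. The payoff on your side is a self-contained argument that makes the mechanism transparent: along a noncompact face $\gamma$ the formal $\gamma$-part may be a divergent series, but since any defining pair $(a,l)$ of a noncompact proper face necessarily has $a_{j_0}=0$ for some $j_0$, the $t$-independent factor $\chi_0(x_{j_0}/\epsilon_\alpha)$ prunes the rescaled sum to a locally finite one whenever $x_{j_0}\neq 0$, while the case $x_j=0$ for all $j\notin I$ reduces to Taylor's theorem. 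The paper's proof is shorter but delegates a key step to an external lemma and leaves to the reader the verification that the pieces of $P$ reassemble into a single $\hat{\mathcal E}$-function. One small remark worth making explicit in your write-up: your Case~2 argument alone re-proves the standard fact that compact faces are always admitted (every defining pair of a compact face has all $a_j>0$), so the two cases genuinely cover all proper faces and all defining pairs.
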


\begin{proof}
Since the proposition is obvious when
$f$ does not vanish at the origin or 
$f$ is flat at the origin, 
we will only consider the other cases. 

Let $p=(p_1,\ldots,p_n)\in\Z_+^n$ be a 
vertex of the Newton polyhedron of $f$. 
We inductively 
define $C^{\infty}$ functions $R_0,R_1,\ldots,R_n$ 
defined near the origin as follows.

Let $R_0(x)=f(x)$. Let $k$ is an integer with 
$1\leq k\leq n$.
If $p_k\geq 1$,  then 
there exists a $C^{\infty}$ function $R_{k}$ 
defined near the origin such that  
\begin{equation}\label{eqn:13}
R_{k-1}(x)=
\sum_{j=0}^{p_k-1}
c_{kj}(x_1,\ldots,x_{k-1},x_{k+1},\ldots,x_n)x_k^j+
x_k^{p_k} R_k(x),
\end{equation}
where $c_{jk}$ are $C^{\infty}$ functions 
of $(n-1)$-variables. 
Note that (\ref{eqn:13}) is the Taylor expansion 
of $R_{k-1}$
with respect to the variable $x_k$.
On the other hand, 
if $p_k=0$, then set $R_k(x)=R_{k-1}(x)$.

By the use of the equations for $k=1,\ldots,n$, 
$f$ can be expressed as 
\begin{equation}\label{eqn:14}
f(x)=P(x)+x^p R_n(x),
\end{equation}
where $P$ is written in the form
of the sum of a monomial times 
a $C^{\infty}$ function of $(n-1)$-variables, 
and $x^p=x_1^{p_1} \cdots x_n^{p_n}$. 
It is easy to see that $R_n(0)=c_p$, 
where $c_p$ is as in (\ref{eqn:12})
(i.e., $c_p$ is the coefficient of the term 
containing $x^p$ of the Taylor series of $f$). 
Since $p$ is the vertex of the Newton polyhedron
of $f$, $c_p$ does not vanish, which implies
$R_n(0)\neq 0$.
It follows from Proposition~6.3 
in \cite{KaN16jmst}, we can see that $x^p R_n(x)$ 
satisfies the 
$\hat{\mathcal E}$-condition. 

Now, let us construct a $C^{\infty}$ function
$f_0$ in the proposition by induction on $n$.
Note that every non-flat $C^{\infty}$ function of one variable 
satisfies the $\hat{\mathcal E}$-condition. 
Let $f$ be a non-flat $C^{\infty}$ function of $n$-variables
with $f(0)=0$. 
Then $f$ can be expressed as in (14) and 
$P$ takes the form of the sum of 
a monomial times a $C^{\infty}$ function
of $(n-1)$-variables.
Now we assume that every $C^{\infty}$ function
of $(n-1)$-variables can be decomposed 
into an $\hat{\mathcal E}$-function 
and a flat function. 
Then 
$P$ has the same type decomposition
and so a desirable $\hat{\mathcal E}$-function 
$f_0$ can be obtained.

\end{proof}

\subsection{The $\hat{\mathcal E}$-condition
in the complex case}

Now, let us consider a nonnegative 
$C^{\infty}$ function $F$ defined 
near the origin in $\C^n$ with $F(0)=0$.  
If $F$ satisfies the condition (C), then
there exists a $C^{\infty}$ function $f$
defined near the origin in $\R^n$ such that 
$f(|z_1|,\ldots,|z_n|)=F(z_1,\ldots,z_n)$.
We say that $F$ satisfies the $\hat{\mathcal E}$-condition
at the origin,  
if $f$ satisfies the $\hat{\mathcal E}$-condition 
at the origin. 
It is easy to see that 
the $\hat{\mathcal E}$-condition of $F$ 
is independent of the choice of the function $f$.  

\begin{proposition}
For any $C^{\infty}$ function $F$ 
defined near the origin in $\C^n$
satisfying the condition (C), 
there exists a $C^{\infty}$ function $F_0$ 
satisfying the condition (C) and 
the $\hat{\mathcal E}$-condition 
at the origin such that $F-F_0$ is flat 
at the origin. 
\end{proposition}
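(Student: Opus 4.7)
The plan is to reduce Proposition~3 to Proposition~2 by writing $F$ as a function of $|z_1|,\ldots,|z_n|$ and applying the real version to the resulting $C^{\infty}$ function on $\R^n$. Concretely: by condition~(C) and the standard fact that an $S^1$-invariant $C^{\infty}$ function in each $z_j$ is a $C^{\infty}$ function of $|z_j|^2$, there exists a $C^{\infty}$ function $f$ defined near the origin in $\R^n$, \emph{even in each variable}, with $f(|z_1|,\ldots,|z_n|)=F(z_1,\ldots,z_n)$. Applying Proposition~2 to $f$ gives a $C^{\infty}$ function $f_0$ satisfying the $\hat{\mathcal E}$-condition such that $f-f_0$ is flat at the origin. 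I then set $F_0(z):=f_0(|z_1|,\ldots,|z_n|)$.

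The main technical point is to show that the construction in the proof of Proposition~2, when applied to a function $f$ that is even in each variable, outputs an $f_0$ that is also even in each variable. I would verify this by tracing through the construction. First, since the Taylor series of $f$ contains no monomials with an odd exponent in any $x_k$, the support $S(f)$ lies in $(2\Z_+)^n$; hence any vertex $p$ of $\mathcal N_+(f)$ has all components even. Next, when we form the Taylor expansion $R_{k-1}(x)=\sum_{j=0}^{p_k-1}c_{kj}(\hat x_k)x_k^j+x_k^{p_k}R_k(x)$ with respect to $x_k$, the evenness of $R_{k-1}$ in $x_k$ forces $c_{kj}\equiv 0$ for odd $j$ and, because $p_k$ is even, $R_k$ remains even in $x_k$; since the Taylor expansion in $x_k$ does not affect parity in the other variables, $R_k$ is in fact even in each variable. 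By induction on $k$, so are $R_1,\ldots,R_n$, and in particular $R_n$; moreover, each nonzero coefficient $c_{kj}$ appearing in the polynomial part $P$ is even in the remaining $(n-1)$ variables. Invoking the induction on $n$ from Proposition~2 applied to these $c_{kj}$, their $\hat{\mathcal E}$-approximations can also be chosen even in each remaining variable (the base case $n=1$ is trivial since one may take $f_0=f$). Recombining, $f_0=P_0+x^p R_n$ is even in each variable.

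With evenness secured, there exists a $C^{\infty}$ function $\tilde G_0$ near the origin in $\R^n$ such that $f_0(x)=\tilde G_0(x_1^2,\ldots,x_n^2)$ by the classical Whitney-type result applied in each variable; hence $F_0(z)=\tilde G_0(|z_1|^2,\ldots,|z_n|^2)$ is genuinely $C^{\infty}$ on a neighborhood of the origin in $\C^n$, and condition~(C) is immediate. For flatness of $F-F_0$: writing $F(z)=\tilde G(|z_1|^2,\ldots,|z_n|^2)$ similarly, the difference $\tilde G-\tilde G_0$ is flat at the origin (it corresponds under the smooth change of variables $y_j=x_j^2$ to $f-f_0$, whose flatness descends to $\tilde G-\tilde G_0$), so by the chain rule every derivative $\partial_{z_j}^{a}\partial_{\bar z_j}^{b}(F-F_0)$ vanishes at the origin. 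Finally, $F_0$ satisfies the complex $\hat{\mathcal E}$-condition of Section~5.3 because the associated real function is $f_0$ itself, which is $\hat{\mathcal E}$ by Proposition~2.

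The main obstacle I anticipate is this parity-preservation step: the proof of Proposition~2 is a two-level induction (on the dimension $n$, and on the index $k$ of the intermediate functions $R_k$), and one must verify that in both inductions the evenness hypothesis propagates correctly, including the passage from $c_{kj}$ being even in the remaining variables to its $\hat{\mathcal E}$-approximation being chosen even. Once that bookkeeping is carried out, all remaining verifications are routine.
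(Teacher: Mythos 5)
The paper states Proposition~3 without any proof, apparently regarding it as an immediate consequence of Proposition~2, so there is no explicit argument in the paper to compare yours against. Your strategy is sound: pass to the real function $f$ of $|z_1|,\ldots,|z_n|$, choose it even in each variable (via Whitney's theorem, $F(z)=\tilde G(|z_1|^2,\ldots,|z_n|^2)$ and then $f(x)=\tilde G(x_1^2,\ldots,x_n^2)$), strengthen Proposition~2 so the output $f_0$ remains even in each variable, and pull back to $F_0(z)=\tilde G_0(|z_1|^2,\ldots,|z_n|^2)$. The parity bookkeeping you outline is correct: the support of the Taylor series of an even $f$ lies in $(2\Z_+)^n$, so the chosen vertex $p$ has all components even; the Taylor remainder step $R_{k-1}=\sum_{j<p_k}c_{kj}x_k^j+x_k^{p_k}R_k$ kills the odd-$j$ coefficients and preserves evenness of $R_k$ in all variables; the induction on $n$ for the $c_{kj}$ closes, with base case trivial; and the transfer of flatness and of the $\hat{\mathcal E}$-condition of $F_0$ to $f_0$ is as you say. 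One caveat you inherit from the paper: the final recombination in the proof of Proposition~2 (why $P_0+x^pR_n$ with $P_0$ assembled from the $c_{kj,0}$ is again an $\hat{\mathcal E}$-function) is stated only loosely there, and your argument relies on it.

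There is, however, a cleaner route that avoids re-tracing the construction of Proposition~2 at all. Apply Proposition~2 as stated to the even $f$ to obtain some $f_0$ (not necessarily even) with $f-f_0$ flat, and then symmetrize: set
$$f_0^{\mathrm{sym}}(x)=2^{-n}\sum_{\sigma\in\{\pm1\}^n} f_0(\sigma_1 x_1,\ldots,\sigma_n x_n).$$
Since $f$ is even, $f-f_0^{\mathrm{sym}}=2^{-n}\sum_\sigma (f-f_0)(\sigma\cdot x)$ is an average of flat functions and hence flat. Since $f_0$ and $f$ have the same Taylor series, which has support in $(2\Z_+)^n$, the symmetrization leaves the Taylor series and hence ${\mathcal N}_+(f_0)$ unchanged; and for any compact valid pair $(a,l)$ the limit defining the $\gamma$-part of $f_0^{\mathrm{sym}}$ exists because it is the finite average $2^{-n}\sum_\sigma (f_0)_\gamma(\sigma\cdot x)$. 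Thus $f_0^{\mathrm{sym}}$ is even and satisfies the $\hat{\mathcal E}$-condition, and one may proceed as you do. This buys you a proof that uses Proposition~2 as a black box, with no need to strengthen its statement or re-examine the two-level induction.
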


\begin{remark}
It is desirable to show that
the additional conditions (A), (B) imply
the existence of $F_0$ such that  
$F-F_0$ is a {\it nonnegative} flat function. 
If this implication can be shown, 
then the existence of 
$F_0$ can be removed in the assumption
in Theorem~2.
\end{remark}

\section{Behavior of $\tilde{\mathcal B}_F(\rho)$ 
in the $\hat{\mathcal E}$-case}
In order to prove Theorem~2, 
it suffices to consider the case where
$F$ satisfies the $\hat{\mathcal E}$-condition. 
\begin{theorem}
Suppose that $F$ is a $C^{\infty}$ function 
with $F(0)=0$ satisfying the condition (C) 
in Section~2.2 
and the $\hat{\mathcal E}$-condition
at the origin.   
If the principal face of ${\mathcal N}_+(F)$ 
is compact, then 
\begin{equation*}\label{eqn:}
\tilde{\mathcal B}_F(\rho)=
\frac{C(F_*)}{\rho^{2+2/d_F}(\log\rho)^{m_F-1}}
\cdot
(1+o(\rho^{\varepsilon})) \quad 
\mbox{ as $\rho\to 0$,}
\end{equation*}
where $d_F$ and $m_F$ are as in Section~2.1, 
$\varepsilon$ is a positive constant, and 
$C(F_*)$ is a positive constant depending only 
on the principal part $F_*$ of $F$. 
\end{theorem}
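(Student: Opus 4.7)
The plan is to reduce the behavior of $\tilde{\mathcal B}_F(\rho)$ as $\rho\to 0$ to the large-$\tau$ asymptotics of the generating integral $\tilde c_0(\tau)^2$, and then read off the latter from a Varchenko-type expansion for Laplace integrals in the $\hat{\mathcal E}$-class. First, using the circular symmetry (C), write $F(z)=f(|z_1|,\ldots,|z_n|)$ for a smooth $f$ on a neighborhood of $0$ in $\R_+^n$, and pass to polar coordinates $z_j=r_j e^{i\theta_j}$ to obtain
\[
  \tilde c_0(\tau)^2 \;=\; (2\pi)^n \int_{\R_+^n} e^{-2\tau f(r)}\,\varphi(r)\, r_1\cdots r_n\, dr.
\]
Since (C) forces $f$ to be a smooth function of $r_j^2$, the substitution $u_j=r_j^2$ turns this (up to a constant factor) into a standard Laplace integral $\int e^{-2\tau \tilde f(u)}\psi(u)\,du$ on $\R_+^n$ with $\tilde f$ smooth, inheriting the $\hat{\mathcal E}$-condition from $F$, and satisfying $\mathcal N_+(\tilde f)=\tfrac12 \mathcal N_+(F)$; in particular $d_{\tilde f}=d_F/2$, $m_{\tilde f}=m_F$, and the principal face of $\tilde f$ is compact exactly when that of $F$ is.

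Second, I would invoke the Varchenko-type asymptotics for Laplace integrals of $\hat{\mathcal E}$-functions with compact principal face, as developed in \cite{KaN16jmst}, \cite{KaN16tams} and announced in Section~8. This yields
\[
  \tilde c_0(\tau)^2 \;=\; \frac{\tilde A(F_*)\,(\log\tau)^{m_F-1}}{\tau^{2/d_F}}\bigl(1+o(1)\bigr)\quad (\tau\to\infty),
\]
with a more precise remainder once subleading log-powers at the same $\tau$-order are absorbed into the leading behavior. Here $\tilde A(F_*)>0$ depends only on the principal part $F_*$ (it is expressible as an integral over the distinguished toric strata cut out by the principal face), and the size of the next correction, of order $\tau^{-\varepsilon'}$ times the main term, is controlled by the next active face of $\mathcal N_+(F)$. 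Compactness of the principal face is exactly what guarantees that the leading coefficient is a finite, positive constant determined entirely by $F_*$.

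Third, I would substitute this into the Haslinger-type formula (\ref{eqn:8}) and invert. A Watson's-lemma argument --- rescaling $\tau=s/(2\rho)$, discarding a small-$\tau$ piece which is exponentially negligible in $\rho$ thanks to the lower bound (\ref{eqn:10}), and expanding $(\log(s/(2\rho)))^{m_F-1}$ around its dominant value $|\log(2\rho)|^{m_F-1}$ --- then produces
\[
  \tilde{\mathcal B}_F(\rho) \;=\; \frac{C(F_*)}{\rho^{2+2/d_F}\,|\log\rho|^{m_F-1}}\bigl(1+o(\rho^\varepsilon)\bigr),
\]
with $C(F_*)$ an explicit multiple of $\tilde A(F_*)^{-1}\,\Gamma(2+2/d_F)$ and $\varepsilon>0$.

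The technical heart, and main obstacle, is Step~2: establishing the Varchenko-type expansion to the required precision for $\hat{\mathcal E}$-functions, where the classical proof via resolution of singularities for real-analytic phases does not apply directly. One must invoke the toric-coordinate-change machinery of \cite{KaN16jmst}, \cite{KaN16tams}, which processes only Newton-polyhedral data and is insensitive to flat perturbations. A secondary delicate point is the precise tracking of the remainder in Step~3, particularly when $m_F\geq 2$: subleading $(\log\tau)^k$ contributions (with $k<m_F-1$) at the same $\tau$-exponent generate, after inversion, $O(|\log\rho|^{-1})$ corrections that must be carefully packaged into the stated error, in the spirit of Remark~1(6).
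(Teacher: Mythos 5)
Your proposal follows the same overall strategy as the paper: pass to polar coordinates to express $\tilde c_0(\tau)^2$ as a real Laplace integral, extract its large-$\tau$ asymptotics from the $\hat{\mathcal E}$-class version of Varchenko's method (Theorems~4 and~5, proved via meromorphic continuation of $Z_f$ and Mellin inversion of the fiber integral $H_f$), and then invert the Laplace transform in (\ref{eqn:8}) by a Watson's-lemma argument. The one place where you deviate is the extra substitution $u_j = r_j^2$. The paper does not perform it; instead it keeps the weight $\prod_j x_j$ in the Laplace integral $L_f$ of (\ref{eqn:19}) and uses the \emph{weighted} versions of the zeta function and Laplace asymptotics (which is why the exponent $-2/d_f$, rather than $-1/d_f$, appears in Theorem~5). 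Your substitution eliminates the weight and halves the Newton polyhedron, so that the standard unweighted theory applies with $d_{\tilde f}=d_F/2$, yielding the same exponent. This is a perfectly legitimate alternative packaging, but it imports two claims that you state without justification and that the paper avoids by working with $f$ directly: (i) that $\tilde f(u)=f(\sqrt{u_1},\ldots,\sqrt{u_n})$ is $C^\infty$ at the origin (a Whitney-type consequence of the evenness coming from condition (C)); and (ii) that $\tilde f$ inherits the $\hat{\mathcal E}$-condition and compactness of the principal face from $F$. Both are true but would need an argument.

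One hypothesis you do not mention at all is the \emph{Newton nondegeneracy} of $f$. This is an explicit assumption in both Theorem~4 and Theorem~5, and it is what guarantees the absence of extra singularities in $Z_f$ and the strict positivity of the leading coefficient. In the paper's proof of Lemma~3 this is derived from conditions (A)--(C) via \cite{Kam21jmsj} (the $\gamma$-parts $F_\gamma$ are positive on $(\R\setminus\{0\})^n$, and then the Euler identity of Section~9.3 upgrades this to nondegeneracy). Your Step~2 invokes the Varchenko-type expansion without verifying this hypothesis, so as written there is a gap: ``compactness of the principal face'' alone does not give a positive leading constant, and without nondegeneracy Theorem~4 need not apply. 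Everything else, including your discussion of the remainder and the treatment of subleading log-powers in Step~3, matches the paper's argument.
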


\begin{proof}
Recall the expression of $\tilde{\mathcal B}_F(\rho)$:
\begin{equation*}\label{eqn:}
\begin{split}
&\tilde{\mathcal B}_F(\rho)=
\frac{1}{2\pi}
\int_0^{\infty} 
e^{-2 \rho \tau} \frac{\tau}{\tilde{c}_0(\tau)^2} d\tau\\
&\mbox{ with }\,\,
\tilde{c}_0(\tau)^2=
\int_{\C^n} e^{-2\tau F(z)}\Phi(z)dV(z).
\end{split}
\end{equation*}
This theorem can be directly shown 
by applying the following lemma to
the above integral formulas. 
\end{proof}
\begin{lemma}
Under the same assumptions on $F$ as those 
of Theorem~3, 
$\tilde{c}_0(\tau)^2$ satisfies 
\begin{equation}\label{eqn:15}
\tilde{c}_0(\tau)^2 =
c(F_*)\tau^{-2/d_F} (\log \tau)^{m_F-1} \cdot
(1+o(\tau^{-\varepsilon}))\quad 
\mbox{ as $\tau\to\infty$},
\end{equation}
where $\varepsilon$ is a positive constant and 
$c(F_*)$ is a positive constant depending only on 
the principal part $F_*$ of $F$.
\end{lemma}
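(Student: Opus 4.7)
The plan is to reduce $\tilde{c}_0(\tau)^2$ to a real Laplace-type integral on $\R_+^n$ and then apply the Varchenko-type asymptotic formula for the $\hat{\mathcal E}$-class established in \cite{KaN16jmst}, \cite{KaN16tams} (a generalization of \cite{Var76}; see also \cite{CKN13} and Section~8).

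Using condition (C), write $F(z)=f(|z_1|,\ldots,|z_n|)$, where $f$ satisfies the $\hat{\mathcal E}$-condition by Section~5.3. Passing to polar coordinates $z_j=r_je^{i\theta_j}$ in (\ref{eqn:9}) and then substituting $x_j=r_j^2$ yields
$$
\tilde{c}_0(\tau)^2=\pi^n\int_{\R_+^n} e^{-2\tau g(x)}\,\tilde{\varphi}(x)\,dx,
$$
where $g(x_1,\ldots,x_n):=f(\sqrt{x_1},\ldots,\sqrt{x_n})$ and $\tilde{\varphi}(x)=\varphi(\sqrt{x_1},\ldots,\sqrt{x_n})$ is a smooth cutoff near the origin. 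Rotational symmetry forces $f$ to be a smooth function of the $r_j^2$, so $g$ is $C^\infty$ near the origin in $\R^n$. Comparing Taylor series gives ${\mathcal N}_+(g)=\tfrac{1}{2}{\mathcal N}_+(F)$, hence $d_g=d_F/2$, $m_g=m_F$, and the principal face of $g$ is the image of $\gamma_*$ under this rescaling, so $g_*$ is the reparametrization of $F_*$ and is again compact.

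Next, verify that the $\hat{\mathcal E}$-condition passes from $f$ to $g$ through the monomial substitution $r\mapsto r^2$: the admissibility of a $\gamma$-part is stable under quasi-homogeneous scalings by the very definition in Section~5.2. One can then apply the Laplace-integral asymptotic for $\hat{\mathcal E}$-class phases with compact principal face to obtain
$$
\int_{\R_+^n}e^{-2\tau g(x)}\,\tilde{\varphi}(x)\,dx=c_1(g_*)\,\tau^{-1/d_g}(\log \tau)^{m_g-1}\bigl(1+o(\tau^{-\varepsilon})\bigr),
$$
where $c_1(g_*)$ depends only on $g_*$. Substituting $d_g=d_F/2$ and $m_g=m_F$ gives (\ref{eqn:15}), with $c(F_*)=\pi^n c_1(g_*)$ (after absorbing a factor of $2^{-1/d_g}$).

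The principal obstacle is invoking the Varchenko-type asymptotic in the $\hat{\mathcal E}$-setting with a genuine power-rate remainder $o(\tau^{-\varepsilon})$ rather than $o(1)$. The classical Varchenko theorem requires real-analyticity and nondegeneracy of the principal part on its face; in our setting one must (i) resolve the singularity at $0$ using the dual fan of ${\mathcal N}_+(g)$, (ii) verify that each local zeta function associated to a cone of the dual fan has its leading pole of the expected order, and (iii) use the decay of non-principal contributions to extract the power-rate error term. All three steps are carried out in \cite{KaN16jmst}, \cite{KaN16tams} under precisely the $\hat{\mathcal E}$-hypothesis plus compactness of the principal face, so the invocation is legitimate. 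A minor secondary point is checking the positivity/nondegeneracy of $g_*$ on $\R_+^n$, which follows from conditions (A)-(C) applied to $F_*$.
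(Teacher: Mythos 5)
Your proof is correct and arrives at the same conclusion as the paper's, but takes a slightly different route in the change-of-variables step, and it is worth spelling out the difference.

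The paper passes to polar coordinates $z_j = x_j e^{i\theta_j}$ and then stops, obtaining
\[
\tilde{c}_0(\tau)^2 = (2\pi)^n \int_{\R_+^n} e^{-2\tau f(x)} \Bigl(\prod_{j=1}^n x_j\Bigr) \varphi(x)\, dx
= (2\pi)^n L_f(\tau),
\]
which is exactly the \emph{weighted} Laplace integral $L_f$ of Section~8. It then applies Theorem~5 directly, which is stated precisely for this weight $\prod_j x_j$ and yields the exponent $-2/d_f$. The Newton nondegeneracy of $f$ is established by citing \cite{Kam21jmsj} (conditions (A--C) imply $F_\gamma>0$ on $(\R\setminus\{0\})^n$ for compact $\gamma$), and $d_f=d_F$, $m_f=m_F$ follow immediately because ${\mathcal N}_+(f)={\mathcal N}_+(F)$ by the definitions.

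You instead push one step further, substituting $x_j=r_j^2$ to absorb the Jacobian weight entirely and produce an \emph{unweighted} Laplace integral in the variable $g(x)=f(\sqrt{x_1},\ldots,\sqrt{x_n})$. This buys you a cleaner Newton polyhedron (${\mathcal N}_+(g)=\tfrac12{\mathcal N}_+(F)$ matches the natural $|z_j|^2$-grading and gives the exponent $-1/d_g=-2/d_F$ directly), but it costs you two extra verifications that the paper's route avoids: (i) smoothness of $g$, which you correctly justify by rotational invariance plus the Whitney/Glaeser theorem on even smooth functions, and (ii) that the $\hat{\mathcal E}$-condition and Newton nondegeneracy transfer from $f$ to $g$ under the quadratic reparametrization. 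You sketch (ii) plausibly (the $\gamma$-part limits transform by $t\mapsto t^{1/2}$ consistently with the rescaled faces), and it does hold, but it is an additional lemma that the paper sidesteps by keeping the weight and invoking the weighted asymptotic, which the Kamimoto--Nose framework supplies out of the box. Both routes are legitimate; the paper's is more economical given the form in which Theorem~5 is stated, while yours is perhaps more natural if one thinks of the Varchenko asymptotics in their classical unweighted form.

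One small presentational point: when you invoke the unweighted $\hat{\mathcal E}$-asymptotic you implicitly rely on the general weighted version from \cite{KaN16jmst}, \cite{KaN16tams} specialized to the trivial weight; since the paper only states the $\prod x_j$-weighted version (Theorem~5), it would be cleaner to either cite the general statement from those references explicitly, or simply not do the $x_j=r_j^2$ substitution and use Theorem~5 as the paper does.
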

The proof of the above lemma will be given in Section~8.


\section{Proof of Theorem~2}

Let us give a proof of Theorem~2. 
We assume that $F$ is a $C^{\infty}$ function satisfying 
the conditions (A--D) in Section~2.2. 
Moreover,  let $F_0$ be as in Theorem~2.
It is easy to see that
$F_0$ also satisfies 
the conditions (B), (C). 

For a positive number $M$, we define 
\begin{equation*}
F_1(z)=F_0(z)+ \sum_{j=1}^n |z_j|^{2M}.
\end{equation*} 
Since the principal face of ${\mathcal N}_+(F_0)$ 
is compact, there exists a positive constant $M$ such that
the principal face of ${\mathcal N}_+(F_1)$
is the same as that of ${\mathcal N}_+(F_0)$.  
Moreover, since $F_1$ is convenient, 
$F_1$ also satisfies the 
$\hat{\mathcal E}$-condition 
at the origin (see Remark~2~(1) ).
It is easy to see that 
there exists a positive number $R$ such that 
\begin{equation}\label{eqn:16}
F_0(z)\leq F(z)\leq F_1(z) \quad 
\mbox{ for $z\in D(R)$.}
\end{equation}

For $j=0,1$, we define
\begin{equation*}
\tilde{\mathcal B}_{F_j}(\rho)=
\frac{1}{2\pi}
\int_0^{\infty} 
e^{-2 \rho \tau} \frac{\tau}{g_j(\tau)} d\tau
 \quad \mbox{ for $\rho>0$,}
\end{equation*}
where
\begin{equation*}
g_j(\tau)=
\int_{\C^n} e^{-2\tau F_j(z)}\Phi(z)dV(z),
\end{equation*}
where $\Phi$ is as in Section~4. 
From the relationship in (\ref{eqn:16}), 
we see
\begin{equation}\label{eqn:17}
\tilde{\mathcal B}_{F_0}(\rho)\leq 
\tilde{\mathcal B}_F(\rho)\leq 
\tilde{\mathcal B}_{F_1}(\rho).
\end{equation}
Since it is easy to see that 
$F_0,F_1$ satisfy the assumptions in 
Theorem~3,  
we have 
\begin{equation}\label{eqn:18}
\tilde{\mathcal B}_{F_j}(\rho)=
\frac{C(F_*)}{\rho^{2+2/d_F}(\log\rho)^{m_F-1}}\cdot
(1+o(\rho^{\varepsilon_j})) \quad 
\mbox{ as $\rho\to 0$},
\end{equation}  
for $j=0,1$, where $\varepsilon_0, \varepsilon_1$ 
are positive numbers.
Therefore, (\ref{eqn:17}), (\ref{eqn:18}) imply that 
\begin{equation*}
\tilde{\mathcal B}_F(\rho)=
\frac{C(F_*)}{\rho^{2+2/d_F}(\log\rho)^{m_F-1}}\cdot
(1+o(\rho^{\varepsilon})) \quad 
\mbox{ as $\rho\to 0$,}
\end{equation*}
where $\varepsilon$ is the minimum of 
$\varepsilon_1, \varepsilon_2$.
From the localization lemma in Proposition~1,
we can obtain the equation 
(\ref{eqn:4}) in the theorem.


\section{
Asymptotic analysis of
some Laplace integrals}


The purpose of this section is to give a
proof of Lemma~3. 
For this purpose, we will investigate 
the behavior of a Lapace integral of the form 
\begin{equation}\label{eqn:19}
L_f(\tau)=
\int_{\R_+^n} 
e^{-2\tau f(x)}\varphi(x) 
\left(\prod_{j=1}^n x_j \right) dx, 
\end{equation}
for large $\tau>0$, 
where 
$f$, $\varphi: V \to\R_+$ are 
nonnegative and nonflat 
$C^{\infty}$ functions defined on 
a small open neighborhood $V$ of the origin
with $f(0)=0$ and $\varphi(0)=1$
and the support of $\varphi$ is contained in 
$V$ of the origin.
Since the compact support of $\varphi$
implies the convergence of the
above integral, 
$L_f$ can be considered as 
a $C^{\infty}$ function defined on $(0,\infty)$.

The analysis in this section is based on 
the studies in \cite{CKN11}, \cite{KaN16jmst}, 
\cite{KaN16tams}, 
which deal with oscillatory integrals 
and local zeta functions.

\subsection{Newton data in the real case}

Let $f$ be a nonflat real-valued 
$C^{\infty}$ function defined near the origin in $\R^n$.
Let $f$ admit the Taylor series 
$\sum_{\alpha\in\Z_+^n} c_{\alpha} x^{\alpha}$
at the origin and  
let ${\mathcal N}_+(f)$ be the Newton polyhedron
of $f$ defined in Section~5.1. 
The {\it Newton distance} of $f$ is a nonnegative
number 
\begin{equation*}
d_f=\min\{
t\geq 0:
(t,\ldots,t)\in{\mathcal N}_+(f)
\}.
\end{equation*}
The minimum proper face of 
${\mathcal N}_+(f)$ containing the point
$P_f:=(d_f,\ldots,d_f)$
is called the 
{\it principal face} of ${\mathcal N}_+(f)$,
and is denoted by $\gamma_*$.
The codimension of $\gamma_*$ 
is called the {\it Newton multiplicity}, 
which is denoted by $m_f$.
When $\gamma_*$ is compact, 
the {\it principal part} of $f$ is defined by 
$
f_*(x)=
\sum_{\alpha\in\gamma_*\cap\Z_+^n}
c_{\alpha} x^{\alpha}.
$

\subsection{Meromorphic extension 
of some local zeta functions}

Let us consider an integral of the form
\begin{equation}\label{eqn:20}
Z_f(s)=\int_{\R_+^n}
f(x)^s \varphi(x) \left(\prod_{j=1}^n x_j \right) dx
\quad \mbox{ for $s\in\C$},
\end{equation}
where $f, \varphi$ are the same as in (\ref{eqn:19}).
The convergence of the integral easily implies that
$Z_f$ can be regarded as a holomorphic function 
on the right half-plane, which is called a
{\it local zeta function} and is samely denoted 
by $Z_f$.

The situation of analytic extension of 
the above local zeta function plays crucial
roles in the investigation of the behavior of 
Laplace integrals as $\tau\to\infty$.

\begin{theorem}
Suppose that 
\begin{enumerate}
\item $f$ satisfies the $\hat{\mathcal E}$-condition (see Section~5);
\item $f$ is Newton nondegenerate (see Section 9.3);
\item The principal face of ${\mathcal N}_+(f)$ is 
compact.
\end{enumerate}
Then $Z_f$ can be analytically continued as a
meromorphic function to the whole complex plane, 
which will be samely denoted by $Z_f$. 
More precisely, 
the set of the poles of $Z_f$ are contained 
in the set $\{-j/m: j\in\N\}$ 
where $m$ is a positive integer.  

Furthermore, 
the most right pole of $Z_f$
exists at $s=-2/d_f$ and its order is $m_f$, 
where $d_f, m_f$ are as in Section~8.1.
The coefficient of the leading term of 
the Laurent expansion of $Z_f$ at $s=-2/d_f$ 
\begin{equation*}
c_{{\tiny Z}}(f_*)=\lim_{s\to -2/d_f} 
\left(
s+\frac{2}{d_f}
\right)^{m_f} \cdot Z_f(s)
\end{equation*}
is a positive constant depending only on the 
principal part $f_*$ of $f$.
\end{theorem}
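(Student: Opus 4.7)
The plan is to prove Theorem~4 by adapting Varchenko's toric resolution method \cite{Var76}, \cite{AGV88} to the $C^\infty$ $\hat{\mathcal E}$-setting developed in \cite{KaN16jmst}, \cite{KaN16tams}. First, I would choose a simplicial refinement $\Sigma^*$ of the normal fan of ${\mathcal N}_+(f)$, giving a proper birational map $\pi:X_{\Sigma^*}\to\R_+^n$ which is monomial on each maximal chart. On the chart associated to a cone $\sigma=\R_+\langle a^{(1)},\ldots,a^{(n)}\rangle$ spanned by primitive generators in $\Z_+^n$, the map takes the form $x_i=\prod_j y_j^{a^{(j)}_i}$, so the measure $(\prod_i x_i)\,dx$ pulls back to $|\det A|\prod_k y_k^{2|a^{(k)}|-1}\,dy$, where $|a|:=a_1+\cdots+a_n$. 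Using a partition of unity subordinate to the charts, I would split $Z_f(s)$ into a finite sum of chart integrals.

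Next, on each chart I would invoke Newton nondegeneracy together with the $\hat{\mathcal E}$-condition to obtain a $C^\infty$ factorization
\begin{equation*}
f\bigl(\pi_\sigma(y)\bigr)=\prod_{j=1}^n y_j^{\ell(f,a^{(j)})}\cdot U_\sigma(y),
\end{equation*}
where $\ell(f,a):=\min\{\langle a,\alpha\rangle:\alpha\in{\mathcal N}_+(f)\}$ and $U_\sigma$ is $C^\infty$ and does not vanish on the relative interior of the coordinate strata meeting the support of $\pi^{*}\varphi$; this is precisely the content of the face-part machinery of \cite{KaN16jmst}. Raising to the $s$-th power, the chart contribution becomes
\begin{equation*}
\int_{\R_+^n}\prod_{j=1}^n y_j^{s\,\ell(f,a^{(j)})+2|a^{(j)}|-1}\,U_\sigma(y)^{s}\,\varphi_\sigma(y)\,dy,
\end{equation*}
which by iterated integration in each $y_j$ with $\ell(f,a^{(j)})\neq 0$ admits a meromorphic continuation to $\C$ with simple poles at $s=-2|a^{(j)}|/\ell(f,a^{(j)})$. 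Clearing denominators over the finitely many primitive generators of $\Sigma^*$ produces the integer $m$ for which every pole of $Z_f$ lies in $\{-j/m:j\in\N\}$.

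To locate the rightmost pole, I would observe that the supporting hyperplane $\{\langle a,x\rangle=\ell(f,a)\}$ passes through $P_f=(d_f,\ldots,d_f)$ precisely when $\ell(f,a)=d_f|a|$, which characterizes the dual cone $\sigma(\gamma_*)$ of the principal face. For $a\in\sigma(\gamma_*)$ one has $-2|a|/\ell(f,a)=-2/d_f$, while every primitive generator of $\Sigma^*$ outside this dual cone contributes a pole strictly to the left of $-2/d_f$. Because $\sigma(\gamma_*)$ has dimension $n-\dim(\gamma_*)=m_f$, any maximal cone of $\Sigma^*$ contained in $\sigma(\gamma_*)$ carries exactly $m_f$ linearly independent primitive generators contributing to the pole, so the chart contributions combine into a pole of order $m_f$ at $s=-2/d_f$.

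Finally, the leading Laurent coefficient $c_Z(f_*)$ is extracted by multiplying each chart integral by $(s+2/d_f)^{m_f}$ and letting $s\to-2/d_f$: only charts whose cone sits inside $\sigma(\gamma_*)$ survive, and on such a chart the limit selects the values of $U_\sigma^{\,-2/d_f}$ along an $m_f$-dimensional coordinate stratum on which $U_\sigma$ is determined by the $\gamma_*$-part of $f$, hence by $f_*$ alone; positivity of $c_Z(f_*)$ follows from nondegeneracy of $f_*$ on $(0,\infty)^n$. The main obstacle is the second paragraph: because we are in the $C^\infty$ framework, the face-parts $f_\gamma$ are defined through the limits of Section~5.2 rather than as Taylor polynomials, and one must carefully apply the structural results of \cite{KaN16jmst}, \cite{KaN16tams} to justify both the factorization $f\circ\pi_\sigma=y^{\ell}U_\sigma$ with a $C^\infty$ unit factor and the $C^\infty$ dependence that makes $U_\sigma^{s}$ admit the meromorphic continuation claimed.
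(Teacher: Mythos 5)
Your sketch is a faithful reconstruction of Varchenko's toric-resolution method, which is exactly what the paper relies on: its proof of this theorem is a one-line citation to Theorems~10.1 and 10.8 of \cite{KaN16tams}, with the remark that follows confirming the method is Varchenko's. One small imprecision worth flagging: a maximal cone of $\Sigma^*$ is $n$-dimensional and so cannot literally be contained in the $m_f$-dimensional cone $\sigma(\gamma_*)$ when $m_f<n$ --- you want maximal cones having an $m_f$-dimensional face inside $\sigma(\gamma_*)$ --- and positivity of $c_Z(f_*)$ (hence non-cancellation among chart contributions) uses $f\geq 0$ together with nondegeneracy, which jointly force $f_*>0$ on $(0,\infty)^n$.
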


\begin{proof}
The above theorem is a special case of 
Theorems 10.1 and 10.8 
in \cite{KaN16tams}.
Indeed, 
$f$ satisfies the conditions (b) and (c) in Theorem 10.8.

\end{proof}

\begin{remark}
The idea of the proof is based on 
the method of Varchenko 
in \cite{Var76}, \cite{AGV88}
(see also \cite{CKN11}, \cite{KaN16jmst}, 
\cite{KaN16tams}).
In his method, the toric resolution of 
singularities based on the geometry
of Newton polyhedra plays an essential role. 
\end{remark}

\subsection{Asymptotic behavior of some Laplace integrals}

From an information about the analytic continuation of 
$Z_f$ in Theorem~4, we can see the behavior of 
the Laplace integral in (\ref{eqn:19}) 
as $\tau\to\infty$.



\begin{theorem}
Suppose that $f,\varphi$ satisfy the same conditions 
as in Theorem~4.
Then $L_f$ admits the asymptotic expansion: 
for any $N\in\N$, 
there exists a positive constant $C_N$ 
such that 
\begin{equation}\label{eqn:21}
\left|
L_f(\tau)-\sum_{j=0}^{N}\sum_{k=1}^{n} 
c_{jk} \tau^{-j/m} 
(\log\tau)^{k-1}
\right|<C_N \tau^{-N/m-\varepsilon},
\end{equation}
for $\tau\geq 2$, 
where $m$ is a positive integer determined by 
${\mathcal N}_+(f)$, 
$\varepsilon$ is a positive number and  
$c_{jk}$ are constants.
In particular, the first term of the expansion 
can be expressed as 
\begin{equation*}
L_f(\tau)=
c_{L}(f_*)\tau^{-2/d_f}(\log\tau)^{m_f-1}
\cdot (1+o(\tau^{-\varepsilon}))\quad
\mbox{ as $\tau\to\infty$},
\end{equation*}
where $d_f$ and $m_f$ are as in Section~8.1
and  
$c_{L}(f_*)$ is a positive number depending only on 
the principal part $f_*$ of $f$. 
\end{theorem}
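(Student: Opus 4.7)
The plan is to extract the asymptotic behavior of $L_f(\tau)$ from the meromorphic structure of $Z_f(s)$ supplied by Theorem~4 via a Mellin transform argument.

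\textbf{Step 1 (Mellin identity).} Starting from $\int_0^\infty e^{-2\tau u}\tau^{s-1}\,d\tau=\Gamma(s)(2u)^{-s}$ for $u>0$ and ${\rm Re}(s)>0$, and exchanging order of integration (justified by positivity and Fubini--Tonelli), I obtain
\begin{equation*}
M(s):=\int_0^\infty L_f(\tau)\tau^{s-1}\,d\tau=\Gamma(s)\,2^{-s}\,Z_f(-s)
\end{equation*}
in the strip $0<{\rm Re}(s)<2/d_f$. The lower bound comes from $L_f(\tau)$ being bounded at $\tau=0$; the upper bound follows from an a priori decay estimate $L_f(\tau)=O(\tau^{-2/d_f+\delta})$ obtainable directly from the positivity of $f$ on the support of $\varphi$ away from the origin.

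\textbf{Step 2 (Contour shift).} By Mellin inversion, with $c\in(0,2/d_f)$,
\begin{equation*}
L_f(\tau)=\frac{1}{2\pi i}\int_{c-i\infty}^{c+i\infty}\Gamma(s)\,2^{-s}\,Z_f(-s)\,\tau^{-s}\,ds.
\end{equation*}
By Theorem~4, the integrand is meromorphic with poles confined to $s=j/m$, $j\in\N$, of order at most $n$. I plan to shift the contour to the line ${\rm Re}(s)=N/m+\eta$ for small $\eta>0$. Each pole crossed at $s=j/m$, say of order $\ell\le n$, contributes via the standard residue formula $\mathrm{Res}_{s=j/m}[H(s)\tau^{-s}(s-j/m)^{-\ell}]$ a finite sum $\sum_{k=1}^{n}c_{jk}\tau^{-j/m}(\log\tau)^{k-1}$, since derivatives of $\tau^{-s}$ in $s$ generate powers of $-\log\tau$. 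Collecting such residues for $j\le N$ yields the displayed partial sum in (\ref{eqn:21}).

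\textbf{Step 3 (Leading coefficient and error).} The rightmost pole is at $s=2/d_f$, of order $m_f$, with $(\sigma+2/d_f)^{m_f}Z_f(\sigma)\to c_Z(f_*)>0$ by Theorem~4. Substituting $\sigma=-s$ gives $(s-2/d_f)^{m_f}Z_f(-s)\to(-1)^{m_f}c_Z(f_*)$; after combining with the sign produced by shifting the contour rightward and the sign $(-1)^{m_f-1}$ from the $(m_f-1)$-th derivative of $\tau^{-s}$ evaluated at $s=2/d_f$, exactly one term survives in the top log-power and yields
\begin{equation*}
c_L(f_*)=\frac{\Gamma(2/d_f)\,2^{-2/d_f}\,c_Z(f_*)}{(m_f-1)!}>0,
\end{equation*}
which depends only on $f_*$ through $c_Z(f_*)$, $d_f$, $m_f$. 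The remaining contour integral along ${\rm Re}(s)=N/m+\eta$ is bounded in modulus by $\tau^{-N/m-\eta}$ times $\int_{\R}|\Gamma(N/m+\eta+it)\,Z_f(-N/m-\eta-it)|\,dt$; once that integral is finite, we get the error $C_N\tau^{-N/m-\varepsilon}$ with $\varepsilon=\eta$, and in particular the stated leading term with remainder $o(\tau^{-\varepsilon})$.

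\textbf{Main obstacle.} The delicate point is Step~3: one needs uniform estimates on $Z_f(-s)$ along vertical lines as $|{\rm Im}(s)|\to\infty$. While Stirling's formula supplies the exponential decay $|\Gamma(c+it)|\sim|t|^{c-1/2}e^{-\pi|t|/2}$, this must dominate the growth of $Z_f(-s)$, which has to be at most polynomial in $|{\rm Im}(s)|$ locally uniformly in ${\rm Re}(s)$. The standard route is the toric resolution of singularities underlying Theorem~4 (Varchenko's method as developed in \cite{Var76}, \cite{AGV88}, \cite{CKN11}, \cite{KaN16tams}): on each monomial chart, $Z_f$ reduces to a product of Beta- and Gamma-type factors whose vertical-line growth is explicit. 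I would invoke these estimates rather than rederive them, this being the one place where the full machinery behind Theorem~4, and not merely its statement, is needed.
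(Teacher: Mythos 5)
Your proposal is correct and the computation of $c_L(f_*)=\Gamma(2/d_f)\,2^{-2/d_f}\,c_Z(f_*)/(m_f-1)!$ checks out, but your route differs from the paper's in one structural respect. The paper first introduces the fiber integral $H_f(u)=\int_{\{f=u\}}\varphi\,(\prod x_j)\,\omega$, notes that $L_f$ is the Laplace transform of $H_f$ and $Z_f$ is its Mellin transform, then inverts the Mellin transform of $Z_f$ alone to get an asymptotic expansion of $H_f(u)$ as $u\to0$, and finally substitutes that expansion termwise into the Laplace integral. You instead Mellin-transform $L_f$ directly, which composes those two steps at once and produces the identity $M(s)=\Gamma(s)\,2^{-s}\,Z_f(-s)$; a single contour shift then yields the expansion. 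Your version has a genuine technical advantage: the Gamma factor supplies exponential decay $|\Gamma(c+it)|\sim|t|^{c-1/2}e^{-\pi|t|/2}$ on vertical lines, so you only need polynomial growth of $Z_f$ in $|\Im(s)|$, whereas the paper's inversion $H_f(u)=\frac{1}{2\pi i}\int Z_f(s)u^{-s-1}ds$ has no built-in damping and is, strictly speaking, more delicate to justify (the paper defers this with the remark ``the above deformation of the contour can be done''). You are right that the one genuine gap common to both approaches is the polynomial growth of $Z_f$ on vertical lines, and right that the correct way to close it is the explicit toric-resolution representation of $Z_f$ underlying Theorem~4: after resolution, $Z_f(s)$ is a finite sum of integrals of the form $\int y^{\langle a,s\rangle+b}\Phi(y,s)\,dy$ with $\Phi$ smooth, compactly supported, and polynomially controlled in $s$, which gives the needed bound locally uniformly in $\Re(s)$. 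That is precisely the step where the statement of Theorem~4 alone is insufficient and the machinery of \cite{Var76}, \cite{KaN16tams} must be invoked, as you anticipate.
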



\begin{proof}
Define
the fiber integral $H_f:\R_+ \to\R$ as
\begin{equation*}
H_f(u)=\int_{W_u}\varphi(x)\left(\prod_{j=1}^n x_j \right)
\omega,
\end{equation*}
where $W_u:=\{x\in\R^n: f(x)=u\}$ and 
$\omega$ is the surface element on $W_u$, 
which is determined by 
$df\wedge \omega=
d x_1\wedge \cdots \wedge d x_n$. 

It is easy to see that 
the Laplace integral $L_f$ 
and the local zeta function $Z_f$ can be represented
by the use of $H_f$ as follows. 
\begin{equation}\label{eqn:22}
L_f(\tau)=\int_0^{\infty} e^{-2\tau u} H_f(u)du,
\end{equation}
\begin{equation}\label{eqn:23}
Z_f(s)=\int_0^{\infty} u^s H_f(u)du.
\end{equation}
Applying the inverse formula 
of the Mellin transform to (\ref{eqn:23}), 
we have
\begin{equation*}
H_f(u)=\frac{1}{2\pi i} 
\int_{r-i\infty}^{r+i\infty} 
Z_f(s) u^{-s-1}ds,
\end{equation*}
where $r>0$ and the integral contour follows
the line ${\rm Re}(s)=r$ upwards. 
The meromorphic extension 
of $Z_f$ in Theorem~4 implies that the deformation 
of the integral contour 
as $r$ tends to $-\infty$ gives 
an asymptotic expansion of $H_f(u)$ as $u\to 0$ 
by the residue formula. 
For any $N\in\N$, there exists a positive constant 
$B_N$ such that 
\begin{equation}\label{eqn:24}
\left|
H_f(u)-\sum_{j=0}^{N}\sum_{k=1}^{n} 
b_{jk} u^{j/m} 
(\log u)^{k-1}
\right|< B_N u^{N/m+\varepsilon},
\end{equation}
for $u\in(0,1/2)$, 
where $m$ is a positive integer determined by 
${\mathcal N}_+(f)$, 
$\varepsilon$ is a positive number and 
$b_{jk}$ are constants.
We remark that the above deformation 
of the contour can be done. 
By applying the asymptotic expansion (\ref{eqn:24})
to (\ref{eqn:22}), 
we can obtain (\ref{eqn:21}). 

In particular, 
we have 
\begin{equation}\label{eqn:25}
H_f(u)=c_{H}(f_*) 
u^{2/d_f-1} (\log u)^{m_f-1}\cdot(1+o(u^{\varepsilon})) 
\quad \mbox{ as $u\to 0$.}
\end{equation}
Substituting (\ref{eqn:25}) into (\ref{eqn:22}), we have
\begin{equation*}
L_f(\tau)=c_{L}(f_*) 
\tau^{-2/d_f}(\log \tau)^{m_f-1}
\cdot(1+o(\tau^{-\varepsilon}))
\quad \mbox{ as $\tau\to\infty$.}
\end{equation*} 
\end{proof}

\subsection{Proof of Lemma 3}

By the use of the polar coordinates 
$z_j=x_j e^{i\theta_j}$, with 
$x_j\geq 0, \theta_j\in\R$, for 
$j=1,\ldots,n$, then we have 
\begin{equation*}
\begin{split}
\tilde{c}_0(\tau)^2&=
\int_{U} e^{-2\tau F(z)} 
\Phi(z)dV(z) \\
&=
(2\pi)^n \int_{\R_+^n} e^{-2 \tau f(x)} 
\left(\prod_{j=1}^n x_j \right) \varphi(x)dV(x),
\end{split}
\end{equation*}
where 
$\varphi$ is as in Section~4 and $f$ 
is a $C^{\infty}$ function defined near the origin 
satisfying 
$f(|z_1|, \ldots, |z_n|)=F(z_1,\ldots, z_n)$. 

From \cite{Kam21jmsj}, the conditions (A-C) implies 
that $F_{\gamma}$ is positive on 
$(\R\setminus\{0\})^n$ for every 
compact face of ${\mathcal N}_+(F)$, 
which implies the Newton nondegeneracy 
of $f$ (see Section~9.3). 
Moreover, it follows from the assumptions of $F$ that 
the the principal face of $f$ is compact  
and $f$ satisfies the $\hat{\mathcal E}$-condition. 
Therefore, since 
$f$ satisfies all the assumptions in Theorem~5, 
we can obtain 
\begin{equation*}
\tilde{c}_0(\tau)^2
=
C(f_*) \tau^{-2/d_f} (\log \tau)^{m_f-1}\cdot
(1+o(\tau^{-\varepsilon})),
\end{equation*}
where $C(f_*)$ is a positive constant depending only
on $f_*$ and $\varepsilon$ is a positive number. 
Since $d_f=d_F$ and $m_f=m_F$, 
we can obtain (\ref{eqn:15}) in the lemma.


\section{Appendix}



\subsection{Asymptotic expansion in (\ref{eqn:2})}

Theorem~1 implies that the singularity 
of the Bergman kernel can be expressed 
by the following complicated asymptotic 
expansion
\begin{equation*}
\Psi(\rho) \sim \sum_{j=0}^{\infty}
\sum_{k=a_j}^{\infty} 
C_{jk}\frac{\rho^{j/m}}{(\log(1/\rho))^{k}}
\quad \mbox{as $\rho\to 0$,} 
\end{equation*}
where $m$ is a positive integer, 
$a_j$ are integers and 
$C_{jk}$ are real numbers.

Let us explain the exact meaning of 
the above asymptotic expansion. 
For any $N\in\N$, 
there exists a positive number
$C_N$ such that 
$$
\left|
\Psi(\rho) - \sum_{j=0}^{N} 
C_{j}(\zeta) \rho^{j/m} 
\right|
<C_N \rho^{N/m+\varepsilon} 
\quad \mbox{ for $\rho\in(0,\delta)$},
$$
where $\zeta=\log(1/\rho)^{-1}$ and 
every $C_j(\zeta)$ satisfies that, for any $M\in\N$
satisfying $M\geq a_j$ there exists a positive number
$\tilde{C}_M$ such that 
$$
\left|
C_j(\zeta) - \sum_{k=a_j}^{M} 
C_{jk} \zeta^{k} 
\right|
<\tilde{C}_M \zeta^{M+\tilde{\varepsilon}} 
\quad \mbox{ for $\zeta\in(0,\delta)$}.
$$
Here $\varepsilon$, $\tilde{\varepsilon}$, 
$\delta$ are some positive numbers. 
Note that $\zeta\to 0$ if and only if $\rho\to0$.

\subsection{Convex geometry}

Let us explain fundamental notions
in the theory of convex polyhedra
which are necessary for our investigation. 
Refer to \cite{Zie95} for a general theory
of convex polyhedra. 

For $(a,l)\in\R^n\times\R$, 
let $H(a,l)$ and $H_+(a,l)$
be a hyperplane and a closed half-space 
in $\R^n$ defied by 
\begin{equation*}
\begin{split}
&H(a,l)=\{x\in\R^n; \langle a, x \rangle=l\}, \\
&H_+(a,l)=
\{x\in\R^n;\langle a, x \rangle\geq l\},
\end{split}
\end{equation*}
respectively. 
Here $\langle a, x \rangle:=\sum_{j=1}^n a_j x_j$.

A ({\it convex rational}) {\it polyhedron} is
an intersection of closed half-spaces:
a set $P\subset \R^n$ presented in the 
form $P=\bigcap_{j=1}^N
H_+(a^j,l_j)$ for some $a^1,\ldots,a^N\in\Z^n$
and $l_1\ldots,l_N\in\Z$. 

Let $P$ be a polyhedron in $\R^n$.
A pair $(a,l)\in\Z\times\Z$ is said to be
valid for $P$ if 
$P$ is contained in $H_+(a,l)$. 
A face of $P$ is any set of the form
$F=P\cap H(a,l)$, where
$(a,l)$ is valid for $P$. 
Since $(0,0)$ is always valid, 
we consider $P$ itself as a trivial
face of $P$; the other faces are called
{\it proper faces}. 
Conversely, it is easy to see that
any face is a polyhedron. 
Considering the valid pair $(0,-1)$, 
we see that the empty set is always
a face of $P$. 
Indeed, $H_+(0,-1)=\R^n$, but 
$H(0,-1)=\emptyset$. 

The dimension of a face $F$
is the dimension of its affine hull 
(i.e., 
the intersection of all affine flats that contain 
$F$), which is denoted by $\dim(F)$.
The faces of dimensions $0,1$ and $\dim(P)-1$
are called vertices, edges and facets, 
respectively. 

When $P$ is the Newton polyhedron of 
some non-flat $C^{\infty}$ function, 
$\gamma$ is a compact face
if and only if every valid pair 
$(a,l)=(a_1,\ldots,a_n,l)$ defining 
$\gamma$ satisfies $a_j>0$ for any $j$.

\subsection{Newton nondegeneracy condition}

We say that $f$ is {\it Newton nondegenerate} if 
the gradient of the $\gamma$-part of $f$ 
has no zero in $(\R\setminus\{0\})^n$
for every compact face $\gamma$ 
of the Newton polyhedron 
${\mathcal N}_+(f)$. 
This concept is very important
in the study of singularity theory.

When $\gamma$ is a compact face 
of ${\mathcal N}_+(f)$
with the valid pair $(a_1,\ldots,a_n,l)$ defining 
$\gamma$, 
the following Euler identity holds:
$$
l f_{\gamma}(x)=
a_1 x_1\frac{\partial f_{\gamma}}{\partial x_1}(x)+\cdots+
a_n x_n\frac{\partial f_{\gamma}}{\partial x_n}(x). 
$$
It follows from this identity that  
if $f_{\gamma}$ has no zero
in $(\R\setminus\{0\})^n$
for every compact face $\gamma$, 
then
$f$ is Newton nondegenerate.

\bigskip

{\it Acknowledgement}
The author greatly appreciates that the referee 
carefully read 
this paper and gave many valuable comments. 
This work was supported by 
JSPS KAKENHI Grant Numbers JP20K03656, JP20H00116

\end{document}